\newtheorem{definition}{Definition}
\newtheorem{lemma}{Lemma}
\newtheorem{proposition}{Proposition}
\newtheorem{theorem}{Theorem}
\newfont{\sBlackboard}{msbm10 scaled 900}
\newcommand{\mylabel}[1]{\label{#1}
            \ifx\undefined\stillediting
            \else \fbox{$#1$}\fi }
\newcommand{\BE}{\begin{equation}}
\newcommand{\EEQ}{\end{equation}}
\newcommand{\rfb}[1]{\mbox{\rm
   (\ref{#1})}\ifx\undefined\stillediting\else:\fbox{$#1$}\fi}
\newfont{\Blackboard}{msbm10 scaled 1200}
\newfont{\roma}{cmr10 scaled 1200}
\def\CC{\rm \hbox{C\kern-.56em\raise.4ex
         \hbox{$\scriptscriptstyle |$}\kern+0.5 em }}
\def\n{|\kern -.05cm{|}\kern -.05cm{|}}
\def\R{{\bf \hbox{\sc I\hskip -2pt R}}} 
\newcommand{\mm}    {{\hbox{\hskip 0.5pt}}}
\newcommand{\bluff} {{\hbox{\raise 15pt \hbox{\mm}}}}
\def\section{\@startsection {section}{1}{\z@}{-3.5ex plus -1ex minus
    -.2ex}{2.3ex plus .2ex}{\large\bf}}
\def\be{\begin{equation}}
\def\ee{\end{equation}}
\date{ }
\begin{document}
\thispagestyle{empty}
\title{\bf Minimaxity and Limits of Risks Ratios of Shrinkage Estimators of a Multivariate Normal Mean in the Bayesian Case  }\maketitle
\author{ \center Abdenour Hamdaoui$^{1}$, Abdelkader Benkhaled$^{2}$ and Nadia Mezouar$^{3}$\\}
$^{1}$ University of Science and Technology of Oran, Mohamed Boudiaf (USTOMB), Department of Mathematics; Laboratory of Statistics and Random Modelisations of University of Tlemcen, Algeria.\\
$^{2}$ Department of Biology, Mascara University, Algeria.\\
$^{3}$ Faculty of Economics and Commercial Sciences, Mascara University, Algeria.\\

\renewcommand{\abstractname} {\bf Abstract}
\begin{abstract}
In this article, we consider two forms of shrinkage estimators of the mean $\theta$ of a multivariate normal distribution $X\sim N_{p}\left(\theta, \sigma^{2}I_{p}\right)$ where $\sigma^{2}$ is unknown. We take the prior law $\theta \sim N_{p}\left(\upsilon, \tau^{2}I_{p}\right)$ and we constuct a Modified Bayes estimator $\delta_{B}^{\ast}$ and an Empirical Modified Bayes estimator $\delta_{EB}^{\ast}$. We are interested in
studying the minimaxity and the limits of risks ratios of these estimators, to the maximum likelihood estimator $X$, when $n$ and $p$ tend to infinity.
\end{abstract}
\noindent
{\small \bf Keywords:} Bayes estimator, James-Stein estimator, Modified Bayes estimator, Multivariate Gaussian random variable, Quadratic risk, Shrinkage estimator.\\
\noindent
{\small \bf AMS classification numbers:} 62F15, 62J07.

\newpage
\section {Introduction}
The problem of estimating the mean vector $\theta$ of a multivariate normal distribution $N_{p}(\theta, \sigma^{2}I_{p})$ in
$\R^{p}$, has experienced many development since the papers \cite{s56, js61, ls72}. In these works one estimates the mean $\theta$ by shrinkage estimators deduced from the empirical mean estimator, which are better in quadratic loss than the empirical mean estimator. The idea of Stein \cite{s56} showed that the maximum likelihood estimator of the mean $\theta$ is inadmissible when the dimension of space parameters exceeds two. James and Stein \cite{js61}, provided a constructive shrinkage estimator denoted by $\delta^{JS}=(1-(p-2)S^{2}/(n+2)\| X \|^{2}) X$, which dominates the empirical mean estimator $\delta_{0}=X$, when the dimension of the parameter space $p$ is $\geq3$.
Baranchik \cite{b64}, proposed the positive-part of the James-Stein estimator $\delta^{JS+}=\max (0;1-(p-2)S^{2}/(n+2)\| X \|^{2}) X $, an estimator dominating the James-Stein estimator.\\
When the dimension $p$ is infinite, Casella and Hwang \cite{ch82}, studied the case where $\sigma^{2}$ is known $(\sigma^{2}=1)$ and showed that if the limit of the ratio $\|\theta\|^{2}/p$ is a constant $c>0$, then the risks ratios of the James-Stein estimator $\delta^{JS}$ and the positive-part of the James-Stein estimator $\delta^{JS+}$, to the maximum likelihood estimator $X$, tend to a constant value $c/(1+c)$. Benmansour and Hamdaoui \cite{bh11} have taken the same model given by Casella and Hwang \cite{ch82}, where the parameter $\sigma^{2}$ is unknown and they established the same results. Hamdaoui and  Benmansour \cite{hb15}, considered the model $X\sim N_{p}(\theta, \sigma^{2}I_{p})$ where $\sigma^{2}$ is unknown and estimated by $S^{2}$ ($S^{2}\sim \sigma^{2}\chi_{n}^{2}$).  They studied the following class of shrinkage estimators $\delta_{\phi }=\delta^{JS}+l(S^{2}\phi( S^{2}, \| X \|^{2})/\|X\|^{2})X$. The authors showed that, when the sample size $n$ and the dimension of parameter space $p$ tend to infinity, the estimators $\delta_{\phi }$ have a lower bound $B_{m}=c/(1+c)$ and if the shrinkage function $\phi$ satisfies some conditions, the risks ratio $R(\delta_{\phi}, \theta)/R(X, \theta)$ attains this lower bound $B_{m}$, in particulary the risks ratios $R(\delta^{JS}, \theta)/R(X, \theta)$ and $R(\delta^{JS+}, \theta)/R(X, \theta)$. In Hamdaoui et al \cite{ham16}, the authors studied the limit of risks ratios of two forms of shrinkage estimators. The first one has been introduced by Benmansour and Mourid \cite{bm07}, $\delta_{\psi}=\delta^{JS}+l(S^{2}\psi( S^{2},\| X \|^{2})/\| X \|^{2})X$, where $\psi(.,u)$ is a function with support $[0,b]$ and satisfies some conditions differents from the one given in Hamdaoui and Benmensour \cite{hb15}. The second is the polynomial form of shrinkage estimator introduced by Li and Kio \cite{tzi82}. Hamdaoui and Mezouar \cite{hm17} studied the general class of shrinkage estimators $\delta_{\phi} =(1-S^{2}\phi ( S^{2}, \| X \|^{2})/\| X \|^{2}t) X$. They showed the same results given in Hamdaoui and  Benmansour \cite{hb15}, with different conditions on the shrinkage function $\phi$.

When the dimension $p$ is finite, many authors studied the minimaxity of shrinkage estimators of a multivariate normal mean, see for example \cite{bs12, m14, s16}.

In this paper, we extend our previous works to the Bayesian case. We adopt the model $X\sim N_{p}\left(\theta, \sigma^{2}I_{p}\right)$ and independently of the observations $X$, we observe $S^{2}\sim \sigma^{2}\chi_{n}^{2}$ an estimator of $\sigma^{2}$. We consider the prior distribution $\theta \sim N_{p}\left(\upsilon, \tau^{2}I_{p}\right)$ where the hyperparameter $\upsilon$ is known and the hyperparameter $\tau ^{2}$ is known or unknown. Note that
$R(X,\theta )=p\sigma ^{2},$ is the quadratic risk of the maximum likelihood estimator. It is well known that the maximum likelihood estimator $X$ is
minimax, so that any estimator dominating it is also minimax. Our goal is to estimate the mean $\theta$ by a Modified Bayes estimator $\delta_{B}^{\ast}$ when the hyperparameter $\tau^{2}$ is known and by an Empirical Modified Bayes estimator $\delta_{EB}^{\ast}$ when the hyperparameter $\tau^{2}$ is unknown.\\
The paper is organized as follows. In Section 1, we give preliminaries containing some results used in the next sections. In Section 2, we give the main results of this paper. First, we take the prior law of $\theta: \theta \sim N_{p}(\upsilon, \tau^{2}I_{p})$ where the hyperparameters $\upsilon$ and $\tau^{2}$ are known and we construct a Modified Bayes estimator $\delta_{B}^{\ast}$. When $n$ and $p$ are fixed, we show that the estimator $\delta_{B}^{\ast}$ is minimax. We study the behaviour of the risks ratio of this estimator to the Maximum likelihood estimator $X$, when $n$ and $p$ tend simultaneously to infinity without assuming any order relation or functional relation between $n$ and $p$. In the second part of this section, we take the prior distribution of $\theta: \theta \sim N_{p}(\upsilon, \tau^{2}I_{p})$ where the
hyperparameter $\upsilon$ is known and the hyperparameter $\tau^{2}$ is unknown and we construct an Empirical Modified Bayes estimators $\delta_{EB}^{\ast}$ of the mean $\theta$. We will follow the same steps as have been given in first part. In the third part of this section, we illustrate graphically the results given in the paper. Finally, we give an Appendix containing technical lemmas used in the proofs of our results.

\section{Preliminaries}
We recall that if $X$ is a multivariate Gaussian random $N_{p}(\theta, \sigma^{2}I_{p}) $ in $\R^{p}$, then $\frac{\| X\|^{2}}{\sigma^{2}}\sim \chi_{p}^{2}(\lambda )$ where $\chi_{p}^{2}(\lambda )$ denotes the non-central chi-square distribution with $p$ degrees of freedom and non-centrality parameter $\lambda=\frac{\| \theta \|^{2}}{2\sigma^{2}}$. The following definition is used to calculate the expectation of functions of a non-central chi-square law's variable.\\

\begin{definition} \label{de 2.1}
Let $U \sim \chi_{p}^{2}(\lambda)$. For any function $f: \R_{+} \longrightarrow \R$, $\chi_{p}^{2}(\lambda )$ integrable, we have
\begin{eqnarray*}
E[f(U)]&=& E_{\chi_{p}^{2}(\lambda )}[f(U)]
\\&=& \int_{\R}f(u)\chi_{p}^{2}(\lambda )du
\\&=&\sum_{k=0}^{+\infty }[\int_{\R_{+}}f(u)\chi_{p+2k}^{2}(0) du] e^{-\frac{\lambda}{2}}\frac{(\frac{\lambda}{2})^{k}}{k!}
\\&=&\sum_{k=0}^{+\infty }[\int_{\R_{+}}f(u)\chi_{p+2k}^{2}du] P( \frac{\lambda }{2};dk) ,
\end{eqnarray*}
where $P( \frac{\lambda }{2};dk)$ being the Poisson distribution of parameter $\frac{\lambda }{2}$ and $\chi_{p+2k}^{2}$ is
the central chi-square distribution with $p+2k$ degrees of freedom.
\end{definition}
We recall the following Lemma given by Fourdrinier et al. \cite{fos08}, that we will use often in the next.\\

\begin{lemma} \label{l 2.1}
Let $X\sim N_{p}(\theta, \sigma^{2}I_{p})$ with $\theta \in \R^{p}$. Then\\
a) for $p\geq 3$, we have: $E(\frac{1}{\| X\|^{2}})=\frac{1}{\sigma^{2}}E(\frac{1}{p-2+2K})$,\\
b) for $p\geq 5$, we have: $E(\frac{1}{(\| X\|^{2})^{2}})=\frac{1}{\sigma^{4}}E(\frac{1}{(p-2+2K)(p-4+2K)})$,\\
where $K\sim P(\frac{\| \theta \|^{2}}{2\sigma^{2}})$ being the Poisson distribution of parameter $\frac{\|\theta \|^{2}}{2\sigma^{2}}.$
\end{lemma}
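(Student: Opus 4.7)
The plan is to reduce everything to the Poisson mixture representation of the non-central $\chi^{2}$ already recorded in Definition \ref{de 2.1}, and then to evaluate the resulting integrals against the \emph{central} $\chi^{2}$ densities using nothing more than the identity $\Gamma(z+1)=z\,\Gamma(z)$.

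First, set $U=\|X\|^{2}/\sigma^{2}$, so that $U\sim\chi_{p}^{2}(\lambda)$ with $\lambda=\|\theta\|^{2}/(2\sigma^{2})$. For part (a), I would write
$$E\!\left(\tfrac{1}{\|X\|^{2}}\right)=\tfrac{1}{\sigma^{2}}\,E\!\left(\tfrac{1}{U}\right),$$
and then apply the mixture formula of Definition \ref{de 2.1} with $f(u)=1/u$ to obtain
$$E\!\left(\tfrac{1}{U}\right)=\sum_{k=0}^{+\infty}\left[\int_{\R_{+}}\tfrac{1}{u}\,\chi_{p+2k}^{2}(u)\,du\right]P\!\left(\tfrac{\lambda}{2};dk\right).$$
The inner integral is a standard Gamma computation: with the central $\chi_{p+2k}^{2}$ density $u^{(p+2k)/2-1}e^{-u/2}/(2^{(p+2k)/2}\Gamma((p+2k)/2))$, the factor $1/u$ shifts the exponent and produces $\Gamma((p+2k)/2-1)/(2\,\Gamma((p+2k)/2))=1/(p+2k-2)$, which is legitimate as long as $p\geq 3$ (so that every term $p+2k-2\geq 1$). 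Recognising $\sum_{k}[1/(p-2+2k)]P(\lambda/2;dk)$ as $E[1/(p-2+2K)]$ with $K\sim P(\lambda/2)$ gives the claimed identity.

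For part (b) the same scheme is applied with $f(u)=1/u^{2}$: the integral $\int_{0}^{\infty}u^{-2}\chi_{p+2k}^{2}(u)\,du$ becomes $\Gamma((p+2k)/2-2)/(4\,\Gamma((p+2k)/2))$, which after two applications of $\Gamma(z+1)=z\Gamma(z)$ equals $1/\{(p+2k-2)(p+2k-4)\}$. This requires $p+2k-4\geq 1$, i.e.\ $p\geq 5$, as stated.

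I do not anticipate a genuine obstacle here: once Definition \ref{de 2.1} is in place, the proof is essentially a bookkeeping exercise with $\Gamma$ functions. The only points that deserve care are (i) checking the domain condition on $p$ needed to keep all $\Gamma$ arguments positive, and (ii) justifying the term-by-term integration, which follows from Fubini/Tonelli since every summand is non-negative. Consequently, a completely rigorous proof can be given in a few lines by combining the mixture representation with the two Gamma evaluations above.
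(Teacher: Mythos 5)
Your approach is sound, and in fact the paper offers nothing to compare it with: Lemma \ref{l 2.1} is simply quoted from Fourdrinier, Ouassou and Strawderman \cite{fos08} with no proof supplied, so a self-contained derivation like yours is a genuine addition rather than an alternative route. Your two Gamma evaluations are correct, namely
\begin{equation*}
\int_{0}^{\infty}\frac{1}{u}\,\frac{u^{m/2-1}e^{-u/2}}{2^{m/2}\Gamma(m/2)}\,du=\frac{1}{m-2},
\qquad
\int_{0}^{\infty}\frac{1}{u^{2}}\,\frac{u^{m/2-1}e^{-u/2}}{2^{m/2}\Gamma(m/2)}\,du=\frac{1}{(m-2)(m-4)},
\end{equation*}
with $m=p+2k$, and the conditions $p\geq 3$ and $p\geq 5$ are exactly what is needed to keep every Gamma argument positive (the binding case being $k=0$). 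Tonelli disposes of the term-by-term integration, as you note.

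The one point you must repair is the Poisson parameter. Read literally, Definition \ref{de 2.1} mixes the central $\chi^{2}_{p+2k}$ laws with weights $P(\lambda/2;dk)$ where $\lambda=\|\theta\|^{2}/(2\sigma^{2})$, and your argument faithfully reproduces this, ending with $K\sim P(\lambda/2)=P(\|\theta\|^{2}/(4\sigma^{2}))$. But the lemma asserts $K\sim P(\|\theta\|^{2}/(2\sigma^{2}))$, and the latter is the correct statement: writing $X/\sigma\sim N_{p}(\theta/\sigma,I_{p})$, the standard mixture representation of $\|X\|^{2}/\sigma^{2}$ uses Poisson weights $e^{-\|\theta\|^{2}/(2\sigma^{2})}\bigl(\|\theta\|^{2}/(2\sigma^{2})\bigr)^{k}/k!$. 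The factor-of-two slip originates in the paper's Definition \ref{de 2.1} itself (its ``non-centrality parameter'' $\lambda$ is already $\|\theta\|^{2}/(2\sigma^{2})$, i.e.\ half of the squared norm of the standardized mean, so the mixing Poisson mean should be $\lambda$ and not $\lambda/2$), and you have inherited it. Replace $P(\lambda/2;dk)$ by $P(\lambda;dk)$ throughout --- or better, verify the mixture representation directly instead of citing the definition as printed --- and your proof lands exactly on the stated identities; everything else is fine.
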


Now, we recall some known results of Bayes estimator.\\
Let $X|\theta \sim N_{p}(\theta, \sigma^{2}I_{p})$ and $\theta \sim N_{p}(\nu, \tau^{2}I_{p})$ where $\sigma^{2}$ is known, and hyperparameters $\nu, \tau^{2}$ are known. From  Lindley et al. \cite{ls72}, we have
\begin{equation*}
\theta| X \sim N_{p}(\nu+B( X-\nu ), \sigma^{2}BI_{p}) \, \text{where}\, B=\frac{\tau^{2}}{\tau^{2}+\sigma^{2}}.
\end{equation*}
Then, the Bayes estimator of $\theta$ is
\begin{equation*}
\delta_{B}(X)=E(\theta \vert X )=\nu+B(X-\nu),
\end{equation*}
thus
\begin{eqnarray}\label{eq2.1}
\delta_{B}(X)=(1-\frac{\sigma^{2}}{\tau^{2}+\sigma^{2}})(X-\nu)+\nu.
\end{eqnarray}
We deduce that
\begin{equation*}
i)\, R(\delta_{B}(X); \theta)=(1-B)^{2}\|\theta -\nu \|^{2}+B^{2}p \sigma^{2}
\end{equation*}
\begin{equation*}
ii)\, \frac{R(\delta_{B}(X);  \nu, \tau^{2}, \sigma^{2})}{R(X)}=\frac{\tau^{2}}{\tau^{2}+\sigma^{2}}.
\end{equation*}

\section{Main results}
In this section we are interested in studying the minimaxity, bounds and limits of risks ratios of a Modified Bayes estimator and an Empirical Modified Bayes estimator, to the maximum likelihood estimator $X$.

To proof our main results we give the following Lemmas.

\begin{lemma} \label{l 3.1}
Let $f$  is a real function. If for $p\geq 3$, $E_{\chi_{p}^{2}(\lambda)}[f(U)]$ exists, then\\
a) if $f$ is monotone non-increasing we have
\begin{equation*}
E_{\chi_{p+2}^{2}(\lambda)}[f(U)]\leq E_{\chi_{p}^{2}(\lambda )}[f(U)],
\end{equation*}
b) if $f$ is monotone non-decreasing we have
\begin{equation*}
E_{\chi_{p+2}^{2}(\lambda )}[f(U)] \geq E_{\chi_{p}^{2}(\lambda)}[f(U)] .
\end{equation*}
\end{lemma}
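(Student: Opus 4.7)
The plan is to exploit the Poisson mixture representation of the non-central chi-square given in Definition~\ref{de 2.1}, reducing the statement to a stochastic ordering fact about central chi-square distributions.

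First, apply Definition~\ref{de 2.1} to both expectations. Writing $\pi_k = e^{-\lambda/2}(\lambda/2)^k/k!$ for the Poisson weights, we get
\begin{equation*}
E_{\chi_{p+2}^{2}(\lambda)}[f(U)] - E_{\chi_{p}^{2}(\lambda)}[f(U)]
 = \sum_{k=0}^{\infty} \bigl( E_{\chi_{p+2+2k}^{2}}[f(U)] - E_{\chi_{p+2k}^{2}}[f(U)] \bigr) \pi_k .
\end{equation*}
So it suffices to show that for each fixed $q \geq 3$,
\begin{equation*}
E_{\chi_{q+2}^{2}}[f(U)] \leq E_{\chi_{q}^{2}}[f(U)]
\end{equation*}
whenever $f$ is non-increasing (and the reverse inequality when $f$ is non-decreasing), and then specialize to $q = p + 2k$.

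For this central-distribution reduction, the key observation is the convolution identity $\chi_{q+2}^{2} \stackrel{d}{=} \chi_{q}^{2} + \chi_{2}^{2}$ with the two summands independent. Concretely, let $V \sim \chi_{q}^{2}$ and $W \sim \chi_{2}^{2}$ be independent; then $V + W \sim \chi_{q+2}^{2}$ and $W \geq 0$ almost surely, so $V + W \geq V$ almost surely. If $f$ is non-increasing, then $f(V+W) \leq f(V)$ pointwise, and taking expectations yields
\begin{equation*}
E_{\chi_{q+2}^{2}}[f(U)] = E\bigl[f(V+W)\bigr] \leq E\bigl[f(V)\bigr] = E_{\chi_{q}^{2}}[f(U)] ,
\end{equation*}
which is exactly what was needed. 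Substituting back into the Poisson mixture above and using $\pi_k \geq 0$ with $\sum_k \pi_k = 1$ gives part a). Part b) follows immediately by applying part a) to $-f$, since $-f$ is non-increasing whenever $f$ is non-decreasing, and this flips both inequalities.

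There is no real obstacle here; the only point that deserves attention is the integrability bookkeeping. The hypothesis that $E_{\chi_{p}^{2}(\lambda)}[f(U)]$ exists ensures $E_{\chi_{p+2k}^{2}}[|f|] < \infty$ for each $k$ with $\pi_k > 0$, so the Poisson-weighted series converges absolutely and the termwise manipulation above is justified by Fubini/Tonelli (equivalently, by Lebesgue's dominated convergence). This allows the same comparison to pass from each central term to the full non-central expectation, completing the proof.
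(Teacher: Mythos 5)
Your proof is correct, but it takes a genuinely different route from the paper's. The paper works with the identity
\begin{equation*}
E_{\chi_{p+2}^{2}(\lambda)}[f(U)]-E_{\chi_{p}^{2}(\lambda)}[f(U)]
  =E\Bigl[f\bigl(\chi_{p}^{2}(\lambda)\bigr)\Bigl(\tfrac{\chi_{p}^{2}(\lambda)}{p+2K}-1\Bigr)\Bigr],
\end{equation*}
conditions on the Poisson index $K$, and invokes the correlation inequality that the covariance of a non-increasing and a non-decreasing function of the same variable is non-positive, combined with the fact that the second factor has conditional mean zero. You instead push the Poisson mixture all the way down to central chi-squares and then prove the central comparison by the explicit coupling $\chi_{q+2}^{2}\stackrel{d}{=}\chi_{q}^{2}+\chi_{2}^{2}$ with the summands independent, so that $f(V+W)\leq f(V)$ pointwise for non-increasing $f$. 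Your argument is more elementary and more transparent: it actually establishes the stronger statement that $\chi_{p+2}^{2}(\lambda)$ stochastically dominates $\chi_{p}^{2}(\lambda)$ in the first-order sense, from which the lemma is immediate for any monotone $f$, whereas the paper's route needs both the (unproved in the text) ratio identity and the Chebyshev-type association inequality. One small caveat in your integrability remark: finiteness of $E_{\chi_{p+2k}^{2}}[|f|]$ for each $k$ does not by itself guarantee that the reweighted series $\sum_k E_{\chi_{p+2+2k}^{2}}[|f|]\,\pi_k$ converges, i.e.\ that $E_{\chi_{p+2}^{2}(\lambda)}[f(U)]$ exists; strictly speaking that existence is an additional (implicit) hypothesis, though the inequality remains valid in the extended sense and the paper glosses over the same point.
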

\begin{proof}
a) From the Definition \ref{de 2.1}, we have
\begin{equation*}
E_{\chi_{p+2}^{2}( \lambda )}[f(U)]-E_{\chi_{p}^{2}(\lambda)}[f(U)]=E[ f(\chi_{p}^{2}(\lambda))(\frac{\chi _{p}^{2}(\lambda )}{p+2K}-1)] ,
\end{equation*}
where $K\sim P( \frac{\lambda }{2}) $ being the Poisson distribution of parameter $\frac{\lambda }{2}$.\\
Using the conditional expectation and the fact that, the covariance of two functions one increasing and the other decreasing is non-positive, we obtain
\begin{eqnarray*}
E[f(\chi_{p}^{2}( \lambda ))( \frac{\chi_{p}^{2}( \lambda )}{p+2K}-1)] &=&E{ E[f(\chi_{p}^{2}( \lambda ) )( \frac{\chi _{p}^{2}( \lambda )}{p+2K}-1) ]|K}  \\
&\leq &E[f(\chi_{p}^{2}( \lambda))] \times E[( \frac{\chi_{p}^{2}( \lambda )}{p+2K}-1)]
\end{eqnarray*}
and
\begin{eqnarray*}
E[( \frac{\chi _{p}^{2}( \lambda )}{p+2K}-1)]&=&E{ E[( \frac{\chi_{p}^{2}( \lambda )}{p+2K}-1)] |K}  \\
&=&E{ E[( \frac{p+2K}{p+2K}-1)] |K}  \\
&=&0,
\end{eqnarray*}
the penultimate equality follows from the Definition \ref{de 2.1}. Thus
\begin{equation*}
E_{\chi_{p+2}^{2}( \lambda )}[f(U)]\leq E_{\chi_{p}^{2}(\lambda )}[f(U)].
\end{equation*}
In the same way, we get b).
\end{proof}

\begin{lemma} \label{l 3.2}
For any $c>0$, we have
\begin{eqnarray}\label{eq3.1}
\frac{1}{n+2+c}\leq E_{\chi_{n+2}^{2}}( \frac{1}{u+c}) \leq \frac{1}{n+c},
\end{eqnarray}
\begin{eqnarray}\label{eq3.2}
\frac{1}{( n+4+c)^{2}}\leq E_{\chi_{n+4}^{2}}[ \frac{1}{( u+c)^{2}}] \leq \frac{1}{( n+c)^{2}}.
\end{eqnarray}
\end{lemma}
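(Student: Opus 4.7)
The plan is to combine Jensen's inequality (for the lower bounds) with a density-based recursion (for the upper bounds). The two lower bounds are immediate: since $u\mapsto 1/(u+c)$ and $u\mapsto 1/(u+c)^{2}$ are convex on $(0,\infty)$, Jensen's inequality applied with $E_{\chi_{n+2}^{2}}[u]=n+2$ and $E_{\chi_{n+4}^{2}}[u]=n+4$ delivers the left-hand sides of (\ref{eq3.1}) and (\ref{eq3.2}) directly.

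For the upper bound in (\ref{eq3.1}), the key tool is the density identity $f_{n+2}(u)=(u/n)\,f_{n}(u)$, which follows at once from the explicit form $f_{n}(u)=u^{n/2-1}e^{-u/2}/(2^{n/2}\Gamma(n/2))$ together with $\Gamma((n+2)/2)=(n/2)\Gamma(n/2)$. This yields the recursion
\[
E_{\chi_{n+2}^{2}}\!\left[\frac{1}{u+c}\right]=\frac{1}{n}\,E_{\chi_{n}^{2}}\!\left[\frac{u}{u+c}\right]=\frac{1}{n}\left(1-c\,E_{\chi_{n}^{2}}\!\left[\frac{1}{u+c}\right]\right).
\]
Substituting the Jensen lower bound $E_{\chi_{n}^{2}}[1/(u+c)]\geq 1/(n+c)$ on the right-hand side gives $E_{\chi_{n+2}^{2}}[1/(u+c)]\leq (1/n)(1-c/(n+c))=1/(n+c)$, as required.

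For the upper bound in (\ref{eq3.2}), I would apply the same density identity twice. Writing $\Phi_{k}=E_{\chi_{k}^{2}}[1/(u+c)]$ and $\Psi_{k}=E_{\chi_{k}^{2}}[1/(u+c)^{2}]$, the identity with $n$ replaced by $n+2$ yields
\[
\Psi_{n+4}=\frac{1}{n+2}\,E_{\chi_{n+2}^{2}}\!\left[\frac{u}{(u+c)^{2}}\right]=\frac{\Phi_{n+2}-c\,\Psi_{n+2}}{n+2}.
\]
By the Cauchy-Schwarz inequality $\Psi_{n+2}\geq \Phi_{n+2}^{2}$, so $\Psi_{n+4}\leq \Phi_{n+2}(1-c\Phi_{n+2})/(n+2)$. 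But the same recursion also gives $(1-c\Phi_{n+2})/(n+2)=\Phi_{n+4}$, hence $\Psi_{n+4}\leq \Phi_{n+2}\,\Phi_{n+4}$. Finally, invoking (\ref{eq3.1}) both directly and with $n$ replaced by $n+2$ produces $\Psi_{n+4}\leq (1/(n+c))(1/(n+2+c))\leq 1/(n+c)^{2}$.

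The main obstacle I expect is precisely this upper bound in (\ref{eq3.2}): direct Jensen runs the wrong way on a convex function, and simply discarding the non-negative term $c\,\Psi_{n+2}$ in the recursion only yields $\Psi_{n+4}\leq 1/((n+2)(n+c))$, which is weaker than $1/(n+c)^{2}$ once $c>2$. The Cauchy-Schwarz step is what saves the argument: it replaces $\Psi_{n+2}$ by the \emph{square} of $\Phi_{n+2}$, so that the residual factor $1-c\Phi_{n+2}$ can be re-identified as another $\Phi$-term and the whole bound factors through the already-established (\ref{eq3.1}).
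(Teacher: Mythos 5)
Your proof is correct, and for the bound \rfb{eq3.1} it coincides with the paper's argument (Jensen for the lower bound; the identity $E_{\chi_{n+2}^{2}}[1/(u+c)]=\tfrac{1}{n}\bigl(1-c\,E_{\chi_{n}^{2}}[1/(u+c)]\bigr)$, which is exactly Lemma~\ref{l 5.2} applied to $h(u)=1/(u+c)$, combined with Jensen again for the upper bound). For the upper bound in \rfb{eq3.2} you start from the same recursion $\Psi_{n+4}=\bigl(\Phi_{n+2}-c\,\Psi_{n+2}\bigr)/(n+2)$, but you then diverge: the paper eliminates the term $c\,\Psi_{n+2}$ via Lemma~\ref{l 3.1}, i.e.\ the monotonicity $\Psi_{n+2}\geq\Psi_{n+4}$ of the expectation in the degrees of freedom, which lets it absorb that term into the left-hand side and conclude $\Psi_{n+4}\leq\Phi_{n+2}/(n+2+c)$; you instead use the variance inequality $\Psi_{n+2}\geq\Phi_{n+2}^{2}$ and re-identify $(1-c\,\Phi_{n+2})/(n+2)$ as $\Phi_{n+4}$, obtaining the factorized bound $\Psi_{n+4}\leq\Phi_{n+2}\,\Phi_{n+4}$. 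Both routes land on the same product $1/\bigl((n+c)(n+2+c)\bigr)\leq 1/(n+c)^{2}$. Your variant has the advantage of bypassing Lemma~\ref{l 3.1} altogether (whose proof requires a covariance inequality for monotone functions), replacing it with the elementary fact $E[Z^{2}]\geq(E[Z])^{2}$; the paper's route, on the other hand, reuses a lemma it needs anyway and keeps all manipulations inside the single recursion. Your closing remark correctly identifies why the naive step of simply dropping $c\,\Psi_{n+2}$ is insufficient for large $c$.
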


\begin{proof}
On the one hand, from Jensen's inequality we have
\begin{equation*}
E_{\chi_{n+2}^{2}}( \frac{1}{u+c}) \geq \frac{1}{n+2+c}.
\end{equation*}
On the other hand, from Lemma \ref{l 5.2} (Appendix), we have
\begin{eqnarray*}
1 &=&E_{\chi_{n}^{2}}( \frac{u}{u+c}) +c\ E_{\chi_{n}^{2}}( \frac{1}{u+c})  \\
&=&n\ E_{\chi_{n+2}^{2}}( \frac{1}{u+c}) +c\ E_{\chi_{n}^{2}}( \frac{1}{u+c}) ,
\end{eqnarray*}
then
\begin{equation*}
E_{\chi_{n+2}^{2}}( \frac{1}{u+c}) =\frac{1}{n}{1-cE_{\chi_{n}^{2}}( \frac{1}{u+c})}
\end{equation*}
\begin{eqnarray}\label{eq3.3}
\leq \frac{1}{n+c},
\end{eqnarray}
the inequality (\ref{eq3.3}) follows from Jensen's inequality.

The proof of the formula (\ref{eq3.2}) is as follows: from Jensen's inequality we have
\begin{eqnarray}\label{eq3.4}
E_{\chi_{n+4}^{2}}[ \frac{1}{( u+c)^{2}}] \geq \frac{1}{( n+4+c)^{2}}.
\end{eqnarray}
In other hand, we have
\begin{equation*}
E_{\chi_{n+2}^{2}}( \frac{1}{u+c}) =E_{\chi_{n+2}^{2}}( \frac{u}{( u+c)^{2}}) +c\ E_{\chi_{n+2}^{2}}( \frac{1}{( u+c)^{2}})
\end{equation*}
\begin{eqnarray}\label{eq3.5}
\hspace{3cm}=(n+2)\ E_{\chi_{n+4}^{2}}[ \frac{1}{( u+c)^{2}}]+c\ E_{\chi_{n+2}^{2}}[ \frac{1}{( u+c)^{2}}]
\end{eqnarray}
\begin{eqnarray}\label{eq3.6}
\hspace{3cm}\geq (n+2)\ E_{\chi_{n+4}^{2}}[ \frac{1}{( u+c)^{2}}]+c\ E_{\chi_{n+4}^{2}}[ \frac{1}{( u+c)^{2}}]
\end{eqnarray}
\begin{equation*}
\geq (n+2+c)E_{\chi_{n+4}^{2}}[ \frac{1}{( u+c)^{2}}],
\end{equation*}
the equality (\ref{eq3.5}) follows from Lemma \ref{l 5.2} and the inequality (\ref{eq3.6}) follows from Lemma \ref{l 3.1}.\\
Hence
\begin{equation*}
E_{\chi_{n+4}^{2}}[ \frac{1}{( u+c)^{2}}] \leq \frac{1}{n+2+c}\ E_{\chi_{n+2}^{2}}[ \frac{1}{u+c}] .
\end{equation*}
Using the formula (\ref{eq3.1}), we obtain
\begin{equation*}
E_{\chi _{n+4}^{2}}[ \frac{1}{( u+c)^{2}}] \leq \frac{1}{( c+n+2)}\frac{1}{( n+c)}\leq \frac{1}{(n+c)^{2}}.
\end{equation*}
\end{proof}

\subsection{A Modified Bayes Estimator}
Now, let $X/\theta \sim N_{p}(\theta, \sigma^{2}I_{p}) $ where $\sigma^{2}$ unknown and estimated by the statistic $S^{2}\sim \sigma^{2}\chi_{n}^{2}$ and $\theta $ has a prior distribution $\theta \sim N_{p}(\nu, \tau^{2}I_{p}) $ with the hyperparameters $\nu, \tau^{2}$ are known.\\
\\
\begin{proposition} \label{p 3.1}
The statistics $\frac{S^{2}}{S^{2}+n \tau^{2}}\ $ is an asymptotically unbiased estimator of the ratio $\frac{\sigma^{2}}{\tau^{2}+\sigma^{2}}$.
\end{proposition}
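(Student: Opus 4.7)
The plan is to rewrite the statistic as a continuous bounded function of the sample variance and then appeal to the convergence of $S^{2}/n$ to $\sigma^{2}$. Dividing numerator and denominator by $n$, we have
$$\frac{S^{2}}{S^{2}+n\tau^{2}}=\frac{S^{2}/n}{S^{2}/n+\tau^{2}}=h(T_{n}),$$
where $T_{n}=S^{2}/n$ and $h(t)=t/(t+\tau^{2})$. The map $h$ is continuous on $[0,\infty)$ and takes values in $[0,1)$, so in particular it is bounded.

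Since $S^{2}\sim\sigma^{2}\chi_{n}^{2}$, we have $E[T_{n}]=\sigma^{2}$ and $\mathrm{Var}(T_{n})=2\sigma^{4}/n\to 0$, hence $T_{n}\to\sigma^{2}$ in $L^{2}$ and in probability. By the continuous mapping theorem, $h(T_{n})\to h(\sigma^{2})=\sigma^{2}/(\sigma^{2}+\tau^{2})$ in probability. Because $0\leq h(T_{n})\leq 1$, the bounded convergence theorem then yields
$$E\!\left[\frac{S^{2}}{S^{2}+n\tau^{2}}\right]=E[h(T_{n})]\longrightarrow \frac{\sigma^{2}}{\sigma^{2}+\tau^{2}},$$
which is exactly the asymptotic unbiasedness claim.

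An alternative, more quantitative route uses Lemma \ref{l 3.2}. Writing $E[S^{2}/(S^{2}+n\tau^{2})]=1-n\tau^{2}E[1/(S^{2}+n\tau^{2})]$ and setting $c=n\tau^{2}/\sigma^{2}$, one has $E[1/(S^{2}+n\tau^{2})]=\sigma^{-2}E_{\chi_{n}^{2}}[1/(u+c)]$. Combining Jensen's inequality (which gives $E_{\chi_{n}^{2}}[1/(u+c)]\geq 1/(n+c)$) with the chi-square moment identity of Lemma \ref{l 5.2} used in the proof of Lemma \ref{l 3.2} (yielding an upper bound $1/(n-2+c)$ for $n\geq 3$), one sandwiches $n\tau^{2}E[1/(S^{2}+n\tau^{2})]$ between $\tau^{2}/(\sigma^{2}+\tau^{2})$ and $n\tau^{2}/(n\sigma^{2}+n\tau^{2}-2\sigma^{2})$, both of which converge to $\tau^{2}/(\sigma^{2}+\tau^{2})$ as $n\to\infty$. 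The bounded convergence argument is cleaner and I expect no real obstacle; the only mild subtlety is justifying the interchange of limit and expectation, which follows immediately from $\|h\|_{\infty}\leq 1$.
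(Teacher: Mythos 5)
Your primary argument is correct but takes a genuinely different route from the paper's. The paper computes the expectation exactly: writing $E\left(\frac{S^{2}}{S^{2}+n\tau^{2}}\right)=E_{\chi_{n}^{2}}\left(\frac{u}{u+n\tau^{2}/\sigma^{2}}\right)$, it applies the identity of Lemma \ref{l 5.2} to rewrite this as $nE_{\chi_{n+2}^{2}}\left(\frac{1}{u+n\tau^{2}/\sigma^{2}}\right)$ and then sandwiches that quantity between $\frac{n}{n(1+\tau^{2}/\sigma^{2})+2}$ and $\frac{1}{1+\tau^{2}/\sigma^{2}}$ via formula (\ref{eq3.1}) of Lemma \ref{l 3.2}, letting $n\to\infty$. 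Your main argument instead views $S^{2}/(S^{2}+n\tau^{2})=h(S^{2}/n)$ as a bounded continuous function of the consistent estimator $S^{2}/n$ and invokes the continuous mapping and bounded convergence theorems; this is sound (the computation $\mathrm{Var}(S^{2}/n)=2\sigma^{4}/n\to 0$ is correct, and $0\leq h\leq 1$ fully justifies the interchange of limit and expectation, assuming $\tau^{2}>0$ so that $h$ is continuous at $\sigma^{2}$). It is more elementary in that it needs no chi-square identities and generalizes to any consistent variance estimator, but it yields no finite-$n$ control of the bias, whereas the paper's sandwich gives explicit nonasymptotic bounds of exactly the kind reused in Theorem \ref{t 3.1}. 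Your alternative quantitative route is essentially the paper's proof in rearranged form (you bound $E_{\chi_{n}^{2}}[1/(u+c)]$ directly, obtaining the upper bound $1/(n-2+c)$ for $n\geq 3$, instead of passing to $E_{\chi_{n+2}^{2}}$), and it too is correct.
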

\begin{proof}
As $S^{2}\sim \sigma^{2}\chi_{n}^{2}$, then
\begin{eqnarray*}
 E(\frac{S^{2}}{S^{2}+n\tau^{2}})&=&E(\frac{\frac{S^{2}}{\sigma^{2}}}{\frac{S^{2}}{\sigma^{2}}+n\frac{\tau^{2}}{\sigma^{2}}})\\
&=&E_{\chi_{n}^{2}}(\frac{u}{u+n\frac{\tau^{2}}{\sigma^{2}}})\\
&=&n E_{\chi_{n+2}^{2}}(\frac{1}{u+n\frac{\tau^{2}}{\sigma^{2}}}),
\end{eqnarray*}
the last equality comes from Lemma \ref{l 5.2} of the Appendix.\\
 From (\ref{eq3.1}) of Lemma \ref{l 3.2}, we have
\begin{equation*}
\frac{n}{n(1+\frac{\tau^{2}}{\sigma^{2}})+2}\leq E(\frac{S^{2}}{S^{2}+n\tau^{2}})=n E_{\chi_{n+2}^{2}}
(\frac{1}{u+n\frac{\tau^{2}}{\sigma^{2}}}) \leq \frac{n}{n(1+\frac{\tau^{2}}{\sigma^{2}})},
\end{equation*}
thus
\begin{equation*}
\lim_{n\rightarrow \infty }E(\frac{S^{2}}{S^{2}+n\tau^{2}})=\frac{1}{1+\frac{\tau^{2}}{\sigma^{2}}}=\frac{\sigma^{2}}{\sigma^{2}+\tau^{2}}.
\end{equation*}
\end{proof}

If we replace in formula (\ref{eq2.1}) the ratio $\frac{\sigma^{2}}{\sigma^{2}+\tau^{2}} $ by its estimator $\frac{S^{2}}{S^{2}+n\tau^{2}}$, we obtain a Modified Bayes estimator expressed as
\begin{eqnarray}\label{eq3.7}
\delta_{B}^{\ast}=(1-\frac{S^{2}}{S^{2}+n\tau^{2}})(X-\nu )+\nu
\end{eqnarray}

The following Theorem gives an explicit formula of the risk of the Modified Bayes estimator $\delta_{B}^{\ast}$, a lower and upper bound of risks ratio of the estimator $\delta_{B}^{\ast}$ to the maximum likelihood estimator $X$.\\
\begin{theorem} \label{t 3.1}
Let the Modified Bayes estimator $\delta_{B}^{\ast}$ given in (\ref{eq3.7}), then\\
i) the quadratic risk of the estimator $\delta_{B}^{\ast}$ is
\begin{equation*}
R(\delta_{B}^{\ast};\nu, \tau^{2}, \sigma^{2})=p\sigma^{2}{1+n(n+2)(1+\frac{\tau^{2}}{\sigma^{2}}) E_{\chi_{n+4}^{2}}(\frac{1}{(u+n\frac{\tau^{2}}{\sigma^{2}})^{2}})-2nE_{\chi_{n+2}^{2}}(\frac{1}{u+n\frac{\tau^{2}}{\sigma^{2}}})},
\end{equation*}\\
ii)
\begin{equation*}
1+\frac{n(n+2) (1+\frac{\tau^{2}}{\sigma^{2}})}{(n(1+\frac{\tau^{2}}{\sigma^{2}})+4)^{2}}-\frac{2}{1+\frac{\tau^{2}}{\sigma^{2}}}\leq \frac{R(\delta_{B}^{\ast }; \nu, \tau^{2}, \sigma^{2})}{R( X)}\leq 1+\frac{(n+2)}{n(1+\frac{\tau^{2}}{\sigma^{2}})}-\frac{2n}{n(1+\frac{\tau^{2}}{\sigma^{2}})+2}.
\end{equation*}
\end{theorem}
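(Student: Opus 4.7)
The plan is to first derive the explicit formula of part (i) by expanding the squared loss and then deduce part (ii) by inserting the inequalities of Lemma~\ref{l 3.2}.

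For part (i), I would write
\begin{equation*}
\delta_{B}^{\ast}-\theta = (X-\theta) - \frac{S^{2}}{S^{2}+n\tau^{2}}\,(X-\nu),
\end{equation*}
so that
\begin{equation*}
\|\delta_{B}^{\ast}-\theta\|^{2} = \|X-\theta\|^{2} - 2\,\frac{S^{2}}{S^{2}+n\tau^{2}}\,(X-\theta)^{T}(X-\nu) + \frac{S^{4}}{(S^{2}+n\tau^{2})^{2}}\,\|X-\nu\|^{2}.
\end{equation*}
Since the risk displayed in the statement depends on the hyperparameters but not on $\theta$, I interpret it as the Bayes risk and average over $\theta\sim N_{p}(\nu,\tau^{2}I_{p})$ as well as over $X$ and the independent $S^{2}$.

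Next I would compute each of the three expectations separately. The first contributes $p\sigma^{2}$. For the cross term, conditioning first on $\theta$ gives $E[(X-\theta)^{T}(X-\nu)\mid\theta]=p\sigma^{2}$, and the independence of $S^{2}$ from $(X,\theta)$ lets me factor; combining with Proposition~\ref{p 3.1}, which rewrites $E[S^{2}/(S^{2}+n\tau^{2})]$ as $nE_{\chi_{n+2}^{2}}\!\bigl(1/(u+n\tau^{2}/\sigma^{2})\bigr)$ via Lemma~\ref{l 5.2}, produces the middle term in the claimed formula. For the quadratic term, the crucial observation is that the marginal law of $X-\nu$ is $N_{p}(0,(\sigma^{2}+\tau^{2})I_{p})$, hence $E\|X-\nu\|^{2}=p(\sigma^{2}+\tau^{2})=p\sigma^{2}(1+\tau^{2}/\sigma^{2})$; then two successive applications of Lemma~\ref{l 5.2} turn $E_{\chi_{n}^{2}}\!\bigl(u^{2}/(u+n\tau^{2}/\sigma^{2})^{2}\bigr)$ into $n(n+2)\,E_{\chi_{n+4}^{2}}\!\bigl(1/(u+n\tau^{2}/\sigma^{2})^{2}\bigr)$. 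Collecting the three contributions yields the expression in (i).

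For part (ii), I would divide the result of (i) by $R(X)=p\sigma^{2}$ and substitute the bounds of Lemma~\ref{l 3.2} with $c=n\tau^{2}/\sigma^{2}$. To maximize the resulting expression I use the upper bound $1/(n+c)^{2}$ for $E_{\chi_{n+4}^{2}}\!\bigl(1/(u+c)^{2}\bigr)$ together with the lower bound $1/(n+2+c)$ for $E_{\chi_{n+2}^{2}}\!\bigl(1/(u+c)\bigr)$; to minimize it I reverse the roles. The simplification $n+c=n(1+\tau^{2}/\sigma^{2})$ cancels one factor of $n(1+\tau^{2}/\sigma^{2})$ in the quadratic term, producing $(n+2)/(n(1+\tau^{2}/\sigma^{2}))$ on the upper side and $n(n+2)(1+\tau^{2}/\sigma^{2})/(n(1+\tau^{2}/\sigma^{2})+4)^{2}$ on the lower side, while the linear contributions give $2n/(n(1+\tau^{2}/\sigma^{2})+2)$ and $2/(1+\tau^{2}/\sigma^{2})$ respectively.

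The main obstacle will be the Bayesian bookkeeping in step (i): one must keep track of which expectations are conditional (using $E[X-\theta\mid\theta]=0$) and which are marginal (using the compound Gaussian law of $X-\nu$), and carefully invoke Lemma~\ref{l 5.2} twice to shift $\chi_{n}^{2}$ to $\chi_{n+4}^{2}$ in the quadratic term. Once (i) is in hand, step (ii) is a routine application of Lemma~\ref{l 3.2} together with the algebraic identity $n+n\tau^{2}/\sigma^{2}=n(1+\tau^{2}/\sigma^{2})$.
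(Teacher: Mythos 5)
Your proposal is correct and follows essentially the same route as the paper: expand the quadratic loss using the independence of $X$ and $S^{2}$, reduce the $S^{2}$-moments via Lemma \ref{l 5.2} to $E_{\chi_{n+2}^{2}}$ and $E_{\chi_{n+4}^{2}}$ expectations, average over the prior (your use of the marginal $X-\nu\sim N_{p}(0,(\sigma^{2}+\tau^{2})I_{p})$ is just a repackaging of the paper's two-stage computation of $E\|X-\nu\|^{2}$), and then plug in the bounds of Lemma \ref{l 3.2} with $c=n\tau^{2}/\sigma^{2}$, pairing the upper bound of the quadratic term with the lower bound of the linear term and vice versa. No gaps.
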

\begin{proof}
i) We have
\begin{equation*}
R(\delta_{B}^{\ast}, \theta )=E_{\theta}\Big\|\Big(1-\frac{S^{2}}{S^{2}+n\tau^{2}}\Big)(X-\nu ) +\nu -\theta\Big\|^{2}.
\end{equation*}
From the independence of variables $X$ and $S^{2}$, we have
\begin{equation*}
\begin{split}
R(\delta_{B}^{\ast}, \theta)&=E_{\theta}\| X-\theta\|^{2}+E_{\theta}\Big(\frac{S^{2}}{S^{2}+n\tau^{2}}\Big)^{2}E_{\theta}\| X-\nu\| ^{2}
\\&-2\sigma^{2}E_{\theta}\Big(\frac{S^{2}}{S^{2}+n\tau^{2}}\Big) E_{\theta}\Big[\langle \frac{X-\theta}{\sigma}, \frac{X-\nu}{\sigma}\rangle\Big]
\\&=p\sigma^{2}+E_{\chi_{n}^{2}}\Big(\frac{u}{u+n\frac{\tau^{2}}{\sigma^{2}}}\Big)^{2}(p\sigma^{2}+\|\theta -\nu\|^{2})-2p\sigma^{2}E_{\chi_{n}^{2}}\Big(\frac{u}{u+n\frac{\tau^{2}}{\sigma^{2}}}\Big)
\\&=p\sigma^{2}+n(n+2)E_{\chi_{n+4}^{2}}\Big(\frac{1}{(u+n\frac{\tau^{2}}{\sigma^{2}})^{2}}\Big)(p\sigma^{2}+\|\theta-\nu \|^{2})
\\&-2pn\sigma^{2}E_{\chi_{n+2}^{2}}\Big(\frac{1}{u+n\frac{\tau^{2}}{\sigma^{2}}}\Big),
\end{split}
\end{equation*}
\begin{equation*}
\begin{split}
 R\left(\delta_{B}^{\ast}; \nu, \tau^{2}, \sigma^{2}\right)&=E_{\nu, \tau^{2}, \sigma^{2}}\left[R\left(\delta_{B}^{\ast};\theta \right) \right]
\\&=p\sigma^{2}+n(n+2)E_{\chi_{n+4}^{2}}\left(\frac{1}{u+n\frac{\tau^{2}}{\sigma^{2}}}\right)^{2}\left(p\sigma^{2}+E_{\nu, \tau^{2}}\|\theta-\nu \|^{2}\right)\\&-2pn\sigma^{2}E_{\chi_{n+2}^{2}}\left(\frac{1}{u+n\frac{\tau^{2}}{\sigma^{2}}}\right)
\end{split}
\end{equation*}
\begin{equation*}
\hspace{2.5cm}=p\sigma^{2}\left\{1+n(n+2)\left(1+\frac{\tau^{2}}{\sigma^{2}}\right)E_{\chi_{n+4}^{2}}\left(\frac{1}{\left(u+n\frac{\tau^{2}}
{\sigma^{2}}\right)^{2}}\right)-2nE_{\chi_{n+2}^{2}}\left(\frac{1}{u+n\dfrac{\tau^{2}}{\sigma^{2}}}\right) \right\} .
\end{equation*}
ii) From i), we have
\begin{equation*}
\frac{R\left(\delta_{B}^{\ast}; \nu, \tau^{2}, \sigma^{2}\right)}{R\left( X\right)}=1+n(n+2)\left(1+\frac{\tau^{2}}{\sigma^{2}}\right)E_{\chi_{n+4}^{2}}\left(\frac{1}{\left(u+n\dfrac{\tau^{2}}{\sigma^{2}}\right)^{2}}\right)
-2nE_{\chi_{n+2}^{2}}\left(\frac{1}{u+n\frac{\tau^{2}}{\sigma^{2}}}\right) .
\end{equation*}
By using formulas (\ref{eq3.1}) and (\ref{eq3.2}) of Lemma \ref{l 3.2}, we obtain
\begin{eqnarray*}
\frac{R\left(\delta_{B}^{\ast}; \nu , \tau^{2}, \sigma^{2}\right)}{R\left( X\right)} &\leq &1+n(n+2)\left(1+\frac{\tau^{2}}{\sigma^{2}}\right) \frac{1}{\left(n+n\frac{\tau^{2}}{\sigma^{2}}\right)^{2}}-\frac{2n}{n+2+n\frac{\tau^{2}}{\sigma^{2}}} \\
&\leq &1+\frac{(n+2)}{n\left(1+\frac{\tau^{2}}{\sigma^{2}}\right)}-\frac{2n}{n\left(1+\frac{\tau^{2}}{\sigma^{2}}\right)+2}
\end{eqnarray*}
and
\begin{equation*}
\frac{R\left(\delta_{B}^{\ast}; \nu, \tau^{2}, \sigma^{2}\right)}{R\left(X\right)}\geq 1+n(n+2)\left(1+\frac{\tau^{2}}{\sigma^{2}}\right)\frac{1}{\left( n+4+n\frac{\tau^{2}}{\sigma^{2}}\right)^{2}}-\frac{2n}{n\left(1+\frac{\tau^{2}}{\sigma^{2}}\right)}
\end{equation*}
\begin{equation*}
\hspace{-0.5cm} \geq 1+\frac{n(n+2)\left(1+\frac{\tau^{2}}{\sigma^{2}}\right)}{\left(n\left(1+\frac{\tau^{2}}{\sigma^{2}}\right)+4\right)^{2}}
-\frac{2}{1+\frac{\tau^{2}}{\sigma^{2}}}.
\end{equation*}
\end{proof}

\begin{theorem} \label{t 3.2}
a) If $n\geq 5$, the estimator $\delta_{B}^{\ast}\ $ given in (\ref{eq3.7}) is minimax,\\
b) $\underset{n,p\rightarrow \infty }{\lim }\dfrac{R\left( \delta_{B}^{\ast}; \nu, \tau^{2}, \sigma^{2}\right)}{R\left( X\right)}=\dfrac{\tau^{2}}{\tau^{2}+\sigma^{2}}$.
\end{theorem}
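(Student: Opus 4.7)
Both parts follow directly from the bounds in Theorem~\ref{t 3.1}(ii); no further manipulation of the risk itself is needed. A key observation is that neither of those bounds depends on $p$, so the dimension plays no active role in either assertion and the limit in~(b) is effectively a limit in $n$ alone.

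\textbf{Part (a).} Write $a = 1 + \tau^{2}/\sigma^{2} \geq 1$. From the upper bound in Theorem~\ref{t 3.1}(ii), minimaxity is equivalent to
\[
\frac{n+2}{na} \;\leq\; \frac{2n}{na+2},
\]
which, after clearing denominators, reduces to $n(n-2)\,a \geq 2(n+2)$. Since $a \geq 1$, it suffices to check $n(n-2) \geq 2(n+2)$, i.e.\ $n^{2} - 4n - 4 \geq 0$, whose smallest integer solution is $n = 5$ (the real threshold being $2 + 2\sqrt{2} \approx 4.83$). Therefore, for every $n \geq 5$, every $\nu \in \R^{p}$ and every $\sigma^{2}, \tau^{2} > 0$, the risks ratio is at most one; since $X$ is minimax, so is $\delta_{B}^{\ast}$.

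\textbf{Part (b).} I apply the squeeze theorem to the two bounds in Theorem~\ref{t 3.1}(ii). With $a = 1 + \tau^{2}/\sigma^{2}$, the upper bound satisfies
\[
\lim_{n \to \infty}\left(1 + \frac{n+2}{na} - \frac{2n}{na+2}\right) \;=\; 1 + \frac{1}{a} - \frac{2}{a} \;=\; 1 - \frac{1}{a} \;=\; \frac{\tau^{2}}{\tau^{2}+\sigma^{2}},
\]
while the lower bound satisfies
\[
\lim_{n \to \infty}\left(1 + \frac{n(n+2)\,a}{(na+4)^{2}} - \frac{2}{a}\right) \;=\; 1 + \frac{1}{a} - \frac{2}{a} \;=\; \frac{\tau^{2}}{\tau^{2}+\sigma^{2}}.
\]
The two limits coincide, so by the squeeze theorem the risks ratio converges to $\tau^{2}/(\tau^{2}+\sigma^{2})$; because neither bound involves $p$, the convergence is uniform in the way $p \to \infty$, and the joint limit in $(n,p)$ gives the same value.

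\textbf{Main obstacle.} The work is largely algebraic, the hard analytic content having been absorbed into Lemma~\ref{l 3.2} and Theorem~\ref{t 3.1}. The only subtle point is the sharp threshold $n \geq 5$ in part~(a): one must retain the factor $(n-2)$ on the left of the key inequality rather than bounding it crudely, otherwise a strictly stronger hypothesis on $n$ would emerge.
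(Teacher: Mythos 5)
Your proof is correct and follows essentially the same route as the paper: both parts are read off from the two bounds in Theorem~\ref{t 3.1}(ii), with (a) amounting to checking that the upper bound is at most $1$ for $n\geq 5$ (the paper invokes a ``study of the variation'' of the same function $h$, which you simply make explicit via the reduction to $n(n-2)a\geq 2(n+2)$ and hence $n^{2}-4n-4\geq 0$), and (b) obtained by squeezing between the two $p$-free bounds. One minor wording slip: the displayed inequality in part (a) is a \emph{sufficient} condition for the risks ratio to be at most one, not equivalent to minimaxity, but you only use it in the correct direction.
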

\begin{proof}
a) From the previous Theorem, we have
\begin{eqnarray}\label{eq3.8}
R\left(\delta_{B}^{\ast}; \nu, \tau^{2}, \sigma^{2}\right) \leq p\sigma^{2}\left\{1+n(n+2)\left(1+\frac{\tau^{2}}{\sigma^{2}}\right) \left(\frac{1}{\left(n+n\dfrac{\tau^{2}}{\sigma^{2}}\right)^{2}}\right)-2n\left(\frac{1}{\left(n+2+n\dfrac{\tau^{2}}{\sigma^{2}}\right)}\right) \right\}.
\end{eqnarray}
The study of the variation of the real function $h\left(x\right)=\dfrac{(x+2)}{x\left(1+\dfrac{\tau^{2}}{\sigma^{2}}\right)}-\dfrac{2x}{x\left(1+\dfrac{\tau^{2}}{\sigma^{2}}\right)+2}$, shows that
\begin{equation*}
\frac{(n+2)}{n\left(1+\dfrac{\tau^{2}}{\sigma^{2}}\right)}-\frac{2n}{n\left(1+\dfrac{\tau^{2}}{\sigma^{2}}\right)+2}\leq 0\, \, \text{for any }n\geq 5.
\end{equation*}
Then
\begin{equation*}
R\left(\delta_{B}^{\ast}; \nu, \tau^{2}, \sigma^{2}\right) \leq p\sigma^{2}\, \, \text{for\ any}\ n\geq 5.
\end{equation*}
b) Immediately from ii) of the Theorem \ref{t 3.1}.
\end{proof}

\subsection{An Empirical Modified Bayes Estimators}
Now, let $X/\theta \sim N_{p}\left(\theta, \sigma^{2}I_{p}\right)$ where $\sigma^{2}$ unknown and estimated by the statistic $S^{2}\sim \sigma^{2}\chi _{n}^{2}$ and $\theta $ has a prior distribution $\theta \sim N_{p}\left(\nu, \tau^{2}I_{p}\right)$ with the hyperparameter $\nu$ is known and the hyperparameter $\tau^{2}$ is unknown.

\begin{proposition} \label{p 3.2}
The statistics $\dfrac{p-2}{n+2}$ $\dfrac{S^{2}}{\| X-\nu \|^{2}}$ is an asymptotically unbiased estimator of the ratio $\dfrac{\sigma^{2}}{\tau^{2}+\sigma^{2}}$.
\end{proposition}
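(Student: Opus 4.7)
The plan is to compute the expectation of the statistic exactly (not asymptotically) and then take the limit as $n \to \infty$. The key observation is that, although $X \mid \theta \sim N_p(\theta, \sigma^2 I_p)$ is a conditional distribution, once we integrate over the prior $\theta \sim N_p(\nu, \tau^2 I_p)$ the marginal distribution of $X - \nu$ is centered Gaussian: $X - \nu \sim N_p(0, (\sigma^2 + \tau^2) I_p)$. Consequently $\|X-\nu\|^2 / (\sigma^2+\tau^2)$ follows a \emph{central} chi-square distribution with $p$ degrees of freedom, so there is no non-centrality parameter left in the calculation.

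First I would invoke Lemma \ref{l 2.1}(a), or equivalently the standard moment of the inverse central chi-square, to obtain
\begin{equation*}
E\left(\frac{1}{\|X-\nu\|^{2}}\right) = \frac{1}{\sigma^{2}+\tau^{2}}\, E_{\chi_{p}^{2}}\!\left(\frac{1}{u}\right) = \frac{1}{(\sigma^{2}+\tau^{2})(p-2)},
\end{equation*}
which is valid as soon as $p \geq 3$. Next, since $S^{2}$ is independent of $X$ (and of $\theta$), and $E(S^{2}) = n\sigma^{2}$, the independence of $S^{2}$ and $\|X-\nu\|^{2}$ gives
\begin{equation*}
E\!\left(\frac{p-2}{n+2}\,\frac{S^{2}}{\|X-\nu\|^{2}}\right) = \frac{p-2}{n+2}\cdot n\sigma^{2}\cdot \frac{1}{(\sigma^{2}+\tau^{2})(p-2)} = \frac{n}{n+2}\cdot \frac{\sigma^{2}}{\sigma^{2}+\tau^{2}}.
\end{equation*}

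Finally I would note that $n/(n+2) \to 1$ as $n \to \infty$, hence the expectation converges to $\sigma^{2}/(\sigma^{2}+\tau^{2})$, which is exactly the asymptotic unbiasedness claimed. There is no real obstacle here; the only subtlety worth flagging is that one must work with the \emph{marginal} (Bayesian) distribution of $X-\nu$ rather than the conditional one, and this is what cancels the $(p-2)$ factor built into the estimator. The condition $p \geq 3$ is needed only to make the expectation $E(1/\|X-\nu\|^{2})$ finite; no joint limit in $(n,p)$ is required, the convergence is in $n$ alone.
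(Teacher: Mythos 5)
Your proof is correct and follows essentially the same route as the paper's: pass to the marginal distribution $X-\nu\sim N_{p}(0,(\sigma^{2}+\tau^{2})I_{p})$, use independence of $S^{2}$ and $\|X-\nu\|^{2}$ together with $E(1/\chi_{p}^{2})=1/(p-2)$, and let $n\to\infty$ in the resulting factor $n/(n+2)$. Your added remarks on the role of $p\geq 3$ and on the convergence being in $n$ alone are accurate but do not change the argument.
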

\begin{proof}
From the independence of variables $S^{2}$ and $\| X\|^{2}$, we have
\begin{eqnarray*}
E\left( \frac{p-2}{n+2}\frac{S^{2}}{\| X-\nu \|^{2}}\right)&=&\frac{p-2}{n+2}E\left( S^{2}\right) E\left( \frac{1}{\| X-\nu \| ^{2}}\right)  \\
&=&\frac{p-2}{n+2}\frac{\sigma^{2}}{\tau^{2}+\sigma^{2}}E\left(\frac{S^{2}}{\sigma^{2}}\right) E\left(\frac{1}{\frac{\| X-\nu \|^{2}}{\tau^{2}+\sigma^{2}}}\right).
\end{eqnarray*}
As the marginal distribution of $X$ is: $X\sim N_{p}\left(\nu, \left(\tau^{2}+\sigma^{2}\right) I_{p}\right)$ and $S^{2}\sim \sigma^{2}\chi_{n}^{2}$, we obtain
\begin{equation*}
E\left(\frac{S^{2}}{\sigma^{2}}\right)=n\,\text{ and }\frac{\| X-\nu \|^{2}}{\tau^{2}+\sigma^{2}}\sim \chi_{p}^{2}.
\end{equation*}
Using the Definition \ref{de 2.1}, we have
\begin{equation*}
E\left(\frac{1}{\frac{\|X-\nu \|^{2}}{\tau^{2}+\sigma^{2}}}\right)=E\left(\dfrac{1}{\chi_{p}^{2}}\right)=\dfrac{1}{p-2},
\end{equation*}
thus
\begin{equation*}
E\left(\frac{p-2}{n+2}\frac{S^{2}}{\| X-\nu \|^{2}} \right)=\frac{\sigma^{2}}{\tau^{2}+\sigma^{2}}\frac{n}{n+2}\underset{n\rightarrow \infty }{\rightarrow }\frac{\sigma^{2}}{\tau^{2}+\sigma^{2}}.
\end{equation*}
\end{proof}

If we replace in formula (\ref{eq2.1}) the ratio $\dfrac{\sigma^{2}}{\sigma^{2}+\tau^{2}}$ by its estimator $\dfrac{p-2}{n+2}\dfrac{S^{2}}{\| X-\nu \|^{2}}$, we obtain an Empirical Modified Bayes estimator expressed as
\begin{eqnarray}\label{eq3.9}
\delta_{EB}^{\ast}=\left(1-\frac{p-2}{n+2}\frac{S^{2}}{\| X-\nu \|^{2}}\right) \left(X-\nu \right)+\nu.
\end{eqnarray}
The following Theorem gives an explicit formula of the risk of the Empirical Modified Bayes estimator $\delta_{EB}^{\ast}$.\\
\begin{theorem} \label{t 3.3}
The quadratic risk of the Empirical Bayes estimator $ \delta_{EB}^{\ast}$ given in (\ref{eq3.9}) is
\begin{equation*}
R\left(\delta_{EB}^{\ast }; \nu, \tau^{2}, \sigma^{2} \right)=p\sigma^{2}\left\{1-\frac{p-2}{p}\frac{n}{n+2}\frac{\sigma^{2}}{\tau^{2}+\sigma^{2}}\right\}.
\end{equation*}
\end{theorem}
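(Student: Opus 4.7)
The plan is to expand the squared norm, exploit independence of the relevant pieces, and handle the cross term via the posterior.

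First I would write $\delta_{EB}^{\ast}-\theta=(X-\theta)-\dfrac{p-2}{n+2}\dfrac{S^{2}}{\|X-\nu\|^{2}}(X-\nu)$ and expand
\begin{equation*}
\|\delta_{EB}^{\ast}-\theta\|^{2}=\|X-\theta\|^{2}-2\frac{p-2}{n+2}\frac{S^{2}\langle X-\theta,X-\nu\rangle}{\|X-\nu\|^{2}}+\left(\frac{p-2}{n+2}\right)^{2}\frac{S^{4}}{\|X-\nu\|^{2}}.
\end{equation*}
Then the Bayes risk is
\begin{equation*}
R(\delta_{EB}^{\ast};\nu,\tau^{2},\sigma^{2})=E_{\nu,\tau^{2},\sigma^{2}}E_{\theta}\|\delta_{EB}^{\ast}-\theta\|^{2},
\end{equation*}
which I would split into three pieces $T_{1},T_{2},T_{3}$ corresponding to the three terms of the expansion, using throughout the independence of $S^{2}$ from the pair $(X,\theta)$ and the facts that marginally $X\sim N_{p}(\nu,(\tau^{2}+\sigma^{2})I_{p})$ and $S^{2}\sim\sigma^{2}\chi_{n}^{2}$.

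Next I would handle the easy pieces. For $T_{1}$, the inner frequentist risk $E_{\theta}\|X-\theta\|^{2}=p\sigma^{2}$ does not depend on $\theta$, so $T_{1}=p\sigma^{2}$. For $T_{3}$, by independence, $E[S^{4}]=n(n+2)\sigma^{4}$ and, since $\|X-\nu\|^{2}/(\tau^{2}+\sigma^{2})\sim\chi_{p}^{2}$, Definition \ref{de 2.1} (or Lemma \ref{l 2.1}a) gives $E[1/\|X-\nu\|^{2}]=1/((\tau^{2}+\sigma^{2})(p-2))$, so
\begin{equation*}
T_{3}=\left(\frac{p-2}{n+2}\right)^{2}\frac{n(n+2)\sigma^{4}}{(\tau^{2}+\sigma^{2})(p-2)}=\frac{n(p-2)\sigma^{4}}{(n+2)(\tau^{2}+\sigma^{2})}.
\end{equation*}

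The main obstacle is $T_{2}$, because the inner product $\langle X-\theta,X-\nu\rangle$ couples $\theta$ and $X$. My approach is to reduce to the posterior: write
\begin{equation*}
\frac{\langle X-\theta,X-\nu\rangle}{\|X-\nu\|^{2}}=1-\frac{\langle\theta-\nu,X-\nu\rangle}{\|X-\nu\|^{2}},
\end{equation*}
and condition on $X$. Using the posterior $\theta\mid X\sim N_{p}(\nu+B(X-\nu),\sigma^{2}BI_{p})$ with $B=\tau^{2}/(\tau^{2}+\sigma^{2})$ recalled in the Preliminaries, one has $E[\theta-\nu\mid X]=B(X-\nu)$, so the conditional expectation of the fraction equals $B$ and hence the unconditional expectation equals $\sigma^{2}/(\tau^{2}+\sigma^{2})$. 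Multiplying by $E[S^{2}]=n\sigma^{2}$ and the constant, I get $T_{2}=2n(p-2)\sigma^{4}/((n+2)(\tau^{2}+\sigma^{2}))$.

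Finally I would add: $R(\delta_{EB}^{\ast};\nu,\tau^{2},\sigma^{2})=T_{1}-T_{2}+T_{3}=p\sigma^{2}-n(p-2)\sigma^{4}/((n+2)(\tau^{2}+\sigma^{2}))$, and factoring $p\sigma^{2}$ yields the announced expression
\begin{equation*}
R(\delta_{EB}^{\ast};\nu,\tau^{2},\sigma^{2})=p\sigma^{2}\left\{1-\frac{p-2}{p}\frac{n}{n+2}\frac{\sigma^{2}}{\tau^{2}+\sigma^{2}}\right\}.
\end{equation*}
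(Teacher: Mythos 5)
Your proof is correct, and it diverges from the paper's at the one genuinely delicate point, the cross term $T_{2}$. The paper keeps $\theta$ fixed, writes the inner product in terms of $Y=(X-\theta)/\sigma\sim N_{p}(0,I_{p})$, and applies Stein's integration-by-parts identity (Lemma \ref{l 5.1}) coordinatewise to obtain
$E_{\theta}\bigl[\langle X-\theta,\tfrac{X-\nu}{\|X-\nu\|^{2}}\rangle\bigr]=(p-2)\,\sigma^{2}E_{\theta}\bigl[\tfrac{1}{\|X-\nu\|^{2}}\bigr]$,
and only then integrates over the prior using the marginal $X\sim N_{p}(\nu,(\tau^{2}+\sigma^{2})I_{p})$ to land on $\sigma^{2}/(\tau^{2}+\sigma^{2})$. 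You instead condition on $X$ and use the linearity of the posterior mean, $E[\theta-\nu\mid X]=B(X-\nu)$ with $B=\tau^{2}/(\tau^{2}+\sigma^{2})$, so that the ratio collapses to the constant $1-B$ before any expectation over $X$ is needed; this avoids Stein's lemma entirely and is arguably the more natural computation in a conjugate normal--normal model. The trade-off is scope: the paper's route yields the frequentist (fixed-$\theta$) value of the cross term as an intermediate, which is the identity reused for the class $\delta_{EB}^{\ast c}$ and is the kind of computation that survives outside the conjugate setting, whereas your argument leans on the exact posterior and only produces the integrated (Bayes) risk directly. The remaining terms $T_{1}$ and $T_{3}$ are handled identically in both proofs (independence of $S^{2}$, $E[S^{4}]=n(n+2)\sigma^{4}$, and $E[1/\chi_{p}^{2}]=1/(p-2)$ for $p\geq 3$, a hypothesis worth stating explicitly since the formula degenerates otherwise), and your final arithmetic matches the announced expression.
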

\begin{proof}
\begin{equation*}
\begin{split}
R\left(\delta_{EB}^{\ast }; \nu , \tau^{2}, \sigma^{2}\right)&=E\left(\|\left(1-\frac{p-2}{n+2}\frac{S^{2}}{\| X-\nu \|^{2}}\right) \left(X-\nu \right)+\nu -\theta \|^{2}\right)
\\&=E\left(\| X-\theta \|^{2}\right)+\left(\frac{p-2}{n+2}\right)^{2}\frac{\sigma^{4}}{\tau^{2}+\sigma^{2}}E\left(\frac{S^{2}}{\sigma^{2}}\right)^{2}E\left(\frac{1}{\frac{\| X-\nu \|^{2}}{\tau^{2}+\sigma^{2}}}\right)  \\
&-\frac{2\left( p-2\right)}{n+2}\sigma^{2}E\left(\frac{S^{2}}{\sigma^{2}}\right) E\left(\left\langle \frac{X-\theta}{\sigma}, \frac{1}{\frac{\| X-\nu \|^{2}}{\sigma^{2}}}\left(\frac{X-\nu }{\sigma}\right) \right\rangle \right) .
\end{split}
\end{equation*}
As $\dfrac{\| X-\theta \|^{2}}{\sigma^{2}}\sim \chi_{p}^{2}$, the marginal distribution of $X$ is: $X\sim N_{p}\left(\nu,\left(\tau^{2}+\sigma^{2}\right) I_{p}\right)$ and $S^{2}\sim \sigma^{2}\chi_{n}^{2}$, we obtain
\begin{equation*}
E\left(\|X-\theta \|^{2}\right)=p\sigma^{2},\, E\left(\frac{S^{2}}{\sigma^{2}}\right)=n\,\text{ and }\frac{\| X-\nu \|^{2}}{\tau^{2}+\sigma^{2}}\sim \chi_{p}^{2}.
\end{equation*}
Using the Definition \ref{de 2.1}, we have
\begin{equation*}
E\left(\frac{S^{2}}{\sigma^{2}}\right)^{2}=n\left( n+2\right)
\end{equation*}
 and
\begin{equation*}
E\left(\frac{1}{\frac{\| X-\nu \|^{2}}{\tau^{2}+\sigma^{2}}}\right)=E\left(\dfrac{1}{\chi_{p}^{2}}\right)=\dfrac{1}{p-2}.
\end{equation*}
Let $Y=\left(y_{1},y_{2},...,y_{p}\right)^{t}=\dfrac{X-\theta }{\sigma}$. It is clear that $Y/\theta \sim N_{p}\left(0,I_{p}\right)$, thus
\begin{eqnarray*}
E\left(\left\langle \frac{X-\theta}{\sigma }, \frac{1}{\frac{\|X-\nu \|^{2}}{\sigma^{2}}}\left(\frac{X-\nu }{\sigma}\right)
\right\rangle \right)&=&E\left(\left\langle Y, \frac{1}{\| Y+\frac{\theta -\nu }{\sigma}\|^{2}}\left( Y+\frac{\theta -\nu }{\sigma}\right) \right\rangle \right)  \\
&=&\sum_{i=1}^{p}E\left[ y_{i}\left(\frac{1}{\sum_{j=1}^{p}\left( y_{j}+\frac{\theta_{j}-\nu_{j}}{\sigma }\right)^{2}}\left( y_{i}+\frac{\theta_{i}-\nu _{i}}{\sigma }\right) \right) \right].
\end{eqnarray*}
Using the Lemma \ref{l 5.1} (Appendix), we have
\begin{eqnarray*}
E\left(\left\langle Y,\frac{1}{\| Y+\frac{\theta -\nu }{\sigma} \|^{2}}\left( Y+\frac{\theta -\nu }{\sigma}\right) \right\rangle \right)  &=&\sum_{i=1}^{p}E\left[\frac{\partial}{\partial y_{i}}\left(\frac{1}{\overset{p}{\underset{j=1}{\sum}}\left( y_{j}+\frac{\theta_{j}-\nu_{j}}{\sigma} \right)^{2}}\left(y_{i}+\frac{\theta_{i}-\nu_{i}}{\sigma }\right) \right) \right]  \\
&=&\sum_{i=1}^{p}E\left[ \frac{1}{\overset{p}{\underset{j=1}{\sum}}\left( y_{j}+\frac{\theta_{j}-\nu_{j}}{\sigma }\right)^{2}}-\frac{2\left( y_{i}+\frac{\theta_{i}-\nu_{i}}{\sigma}\right)^{2}}{\left[\overset{p}{\underset{j=1}{\sum}}\left( y_{j}+\frac{ \theta_{j}-\nu_{j}}{\sigma }\right)^{2}\right]^{2}}\right]  \\
&=&pE\left[\frac{1}{\overset{p}{\underset{j=1}{\sum}}\left( y_{j}+\frac{\theta_{j}-\nu_{j}}{\sigma}\right)^{2}}\right]-2E\left[\frac{\overset{p}{\underset{i=1}{\sum}}\left( y_{i}+\frac{\theta _{i}-\nu_{i}}{\sigma}\right)^{2}}{\left[\overset{p}{\underset{j=1}{\sum}}\left(y_{j}+\frac{\theta_{j}-\nu_{j}}{\sigma}\right)^{2}\right]^{2}}\right]  \\
&=&\left( p-2\right) E\left[\frac{1}{\overset{p}{\underset{j=1}{\sum}}\left( y_{j}+\frac{\theta_{j}-\nu_{j}}{\sigma}\right)^{2}}\right]  \\
&=&\left( p-2\right) E\left[\frac{1}{\left\Vert Y+\frac{\theta -\nu}{\sigma }\right\Vert^{2}}\right]  \\
&=&\left( p-2\right) \frac{\sigma^{2}}{\tau^{2}+\sigma^{2}}E\left[\frac{1}{\frac{\left\Vert X-\nu \right\Vert^{2}}{\tau^{2}+\sigma^{2}}}\right]  \\
&=&\frac{\sigma^{2}}{\tau^{2}+\sigma^{2}}.
\end{eqnarray*}
The last equality follows from the fact that $E\left(\frac{1}{\frac{\left\Vert X-\nu \right\Vert^{2}}{\tau^{2}+\sigma^{2}}}\right) =\dfrac{1}{p-2}$. \\
Hence
\begin{equation*}
R\left(\delta_{EB}^{\ast }; \nu, \tau^{2},\sigma^{2}\right)=p\sigma^{2}\left\{1-\frac{p-2}{p}\frac{n}{n+2}\frac{\sigma^{2}}{\tau^{2}+\sigma^{2}}\right\}.
\end{equation*}
\end{proof}
\begin{theorem} \label{t 3.4}
Let the Empirical Bayes estimator $\delta_{EB}^{\ast }$ given in (\ref{eq3.9}), then\\
a) If $p\geq 3$, the estimator $\delta_{EB}^{\ast }$ is minimax,\\
b) $\underset{n,p\rightarrow \infty }{\lim }\dfrac{R\left( \delta_{EB}^{\ast };\nu, \tau^{2}, \sigma^{2}\right)}{R\left( X\right)}=\dfrac{\tau ^{2}}{\tau^{2}+\sigma^{2}}$.
\end{theorem}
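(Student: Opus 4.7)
The plan is to read both claims directly off the closed-form risk established in Theorem \ref{t 3.3}. Dividing that formula by $R(X) = p\sigma^{2}$ produces the single quantity that controls everything:
\begin{equation*}
\frac{R\left(\delta_{EB}^{\ast}; \nu, \tau^{2}, \sigma^{2}\right)}{R(X)} = 1 - \frac{p-2}{p}\cdot\frac{n}{n+2}\cdot\frac{\sigma^{2}}{\tau^{2}+\sigma^{2}}.
\end{equation*}
Both the minimaxity claim in (a) and the limiting value in (b) fall out by inspecting this product.

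For part (a), I would mirror the argument used in the proof of Theorem \ref{t 3.2}(a) and simply check the sign of the correction term. When $p\geq 3$, each of the three factors $\frac{p-2}{p}$, $\frac{n}{n+2}$, $\frac{\sigma^{2}}{\tau^{2}+\sigma^{2}}$ is strictly positive (using $n\geq 1$ and $\sigma^{2},\tau^{2}>0$), so
\begin{equation*}
R\left(\delta_{EB}^{\ast}; \nu, \tau^{2}, \sigma^{2}\right) < p\sigma^{2} = R(X).
\end{equation*}
Since $X$ itself is minimax with constant risk $p\sigma^{2}$, the same reasoning used for $\delta_{B}^{\ast}$ in Theorem \ref{t 3.2}(a) then yields that $\delta_{EB}^{\ast}$ is minimax.

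For part (b), I would pass to the limit factor by factor: $\frac{p-2}{p}\to 1$ as $p\to\infty$, $\frac{n}{n+2}\to 1$ as $n\to\infty$, and the third factor is constant, so
\begin{equation*}
\lim_{n,p\to\infty}\frac{R\left(\delta_{EB}^{\ast}; \nu, \tau^{2}, \sigma^{2}\right)}{R(X)} = 1 - \frac{\sigma^{2}}{\tau^{2}+\sigma^{2}} = \frac{\tau^{2}}{\tau^{2}+\sigma^{2}},
\end{equation*}
with no need to impose any functional or asymptotic relation between $n$ and $p$. There is essentially no obstacle once Theorem \ref{t 3.3} is in hand; the whole argument reduces to the positivity of a product of three factors and a routine two-variable limit. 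The only conceptual point worth flagging is the one already present in Theorem \ref{t 3.2}(a): the bound is on the Bayes (prior-averaged) risk, and the minimaxity conclusion is drawn in the same spirit as there.
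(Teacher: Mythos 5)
Your proposal is correct and follows exactly the route the paper intends: the paper's own proof of Theorem \ref{t 3.4} is simply ``Immediately from Theorem \ref{t 3.3},'' and your argument spells out the two inspections (positivity of the product of the three factors for minimaxity, and the factorwise limit) that make this immediate. Nothing to add.
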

\begin{proof}
Immediately from the Theorem \ref{t 3.3}.
\end{proof}

Now, we consider the class of shrinkage estimators defined by:
\begin{eqnarray}\label{eq3.10}
\delta _{EB}^{\ast c}=\left( 1-c\frac{S^{2}}{\| X-\nu \|^{2}}\right) \left( X-\nu \right) +\nu
\end{eqnarray}
where $c$ is a real may be depend on $n$ and $p$.
\begin{proposition} \label{p 3.3}
The quadratic risk of the  estimator $\delta _{EB}^{\ast c}$ given in (\ref{eq3.10}) is
\begin{equation*}
R\left( \delta _{EB}^{\ast c};\nu ,\tau ^{2},\sigma ^{2}\right) =p\sigma
^{2}\left\{ 1-\frac{2n\left( p-2\right) c-n\left( n+2\right) c^{2}}{p-2}
\frac{\sigma ^{2}}{\tau ^{2}+\sigma ^{2}}\right\} .
\end{equation*}
\end{proposition}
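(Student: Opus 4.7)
The plan is to repeat the computation performed in the proof of Theorem \ref{t 3.3} but with the free parameter $c$ in place of the specific choice $c=(p-2)/(n+2)$. Writing $V=X-\nu$, the estimator satisfies $\delta_{EB}^{\ast c}-\theta=(X-\theta)-c\,S^{2}V/\|V\|^{2}$, so
\begin{equation*}
\|\delta_{EB}^{\ast c}-\theta\|^{2}=\|X-\theta\|^{2}-2c\,\frac{S^{2}}{\|V\|^{2}}\langle X-\theta,V\rangle+c^{2}\frac{S^{4}}{\|V\|^{2}}.
\end{equation*}
Taking expectations and exploiting the independence of $S^{2}$ and $X$, the risk splits into three terms that I would evaluate separately.

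First I would use $E\|X-\theta\|^{2}=p\sigma^{2}$, $E(S^{2})=n\sigma^{2}$, $E(S^{4})=n(n+2)\sigma^{4}$, and the already-established fact that $\|X-\nu\|^{2}/(\tau^{2}+\sigma^{2})\sim\chi_{p}^{2}$, which gives $E(1/\|V\|^{2})=1/[(p-2)(\tau^{2}+\sigma^{2})]$. These are all immediate from Definition \ref{de 2.1} and the hierarchy assumed at the top of this subsection.

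The only nontrivial step is the cross term $E\langle X-\theta, V/\|V\|^{2}\rangle$. This is exactly the quantity that was shown, in the proof of Theorem \ref{t 3.3}, to equal $\sigma^{2}/(\tau^{2}+\sigma^{2})$ via the Stein-type identity of Lemma \ref{l 5.1} applied coordinate-wise to $Y=(X-\theta)/\sigma$. I would simply cite that computation rather than redo it, since it is independent of $c$. This is the main conceptual step; everything else is routine algebra.

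Finally I would collect the three contributions,
\begin{equation*}
R(\delta_{EB}^{\ast c};\nu,\tau^{2},\sigma^{2})=p\sigma^{2}-2cn\sigma^{2}\cdot\frac{\sigma^{2}}{\tau^{2}+\sigma^{2}}+\frac{c^{2}n(n+2)\sigma^{4}}{(p-2)(\tau^{2}+\sigma^{2})},
\end{equation*}
and factor out $p\sigma^{2}$ together with $\sigma^{2}/(\tau^{2}+\sigma^{2})$ and a common $(p-2)$ to recover the claimed closed form. A sanity check is that substituting $c=(p-2)/(n+2)$ must reproduce the risk formula of Theorem \ref{t 3.3}; this consistency check is what I would use to confirm that the constants have been tracked correctly, since the only real risk in the proof is an arithmetic slip in the final bookkeeping.
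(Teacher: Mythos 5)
Your approach is exactly the paper's: the published proof simply says ``analogous to the proof of Theorem \ref{t 3.3}'', expands the quadratic loss using the independence of $S^{2}$ and $X$, and reuses the moment identities and the Stein-identity evaluation of the cross term $E\langle X-\theta,\,(X-\nu)/\|X-\nu\|^{2}\rangle=\sigma^{2}/(\tau^{2}+\sigma^{2})$, which is precisely what you propose. Your intermediate expression
\begin{equation*}
R\left(\delta_{EB}^{\ast c};\nu,\tau^{2},\sigma^{2}\right)=p\sigma^{2}-2cn\,\frac{\sigma^{4}}{\tau^{2}+\sigma^{2}}+\frac{c^{2}n(n+2)}{p-2}\,\frac{\sigma^{4}}{\tau^{2}+\sigma^{2}}
\end{equation*}
is correct.

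The one point you should not gloss over is the final ``routine algebra'': it does \emph{not} recover the displayed formula of Proposition \ref{p 3.3}. Factoring $p\sigma^{2}$ out of your expression yields
\begin{equation*}
p\sigma^{2}\left\{1-\frac{2n(p-2)c-n(n+2)c^{2}}{p\,(p-2)}\,\frac{\sigma^{2}}{\tau^{2}+\sigma^{2}}\right\},
\end{equation*}
with $p(p-2)$ in the denominator rather than the $p-2$ printed in the statement. Your own proposed sanity check exposes this: at $c=(p-2)/(n+2)$ your expression reduces to $p\sigma^{2}\left\{1-\frac{p-2}{p}\frac{n}{n+2}\frac{\sigma^{2}}{\tau^{2}+\sigma^{2}}\right\}$, in agreement with Theorem \ref{t 3.3}, whereas the Proposition's formula at the same $c$ gives $p\sigma^{2}\left\{1-\frac{n(p-2)}{n+2}\frac{\sigma^{2}}{\tau^{2}+\sigma^{2}}\right\}$, which is larger by a factor of $p$ inside the brace (and would even be negative for large $p$). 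So the statement as printed contains a typo (a missing factor of $p$ in the denominator); your derivation is the correct one. The discrepancy is harmless for Theorem \ref{t 3.5}, since both the minimaxity condition and the optimal $\widehat{c}$ depend only on the sign and the minimizer of the numerator $2n(p-2)c-n(n+2)c^{2}$, which are unaffected.
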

\begin{proof}
Analogous to the proof of Theorem \ref{t 3.3}, so we give a brief idea.
\begin{eqnarray*}
R\left( \delta _{EB}^{\ast };\nu ,\tau ^{2},\sigma ^{2}\right)  &=&E\left(
\|\left( 1-c\frac{S^{2}}{\|X-\nu \| ^{2}}\right)\left( X-\nu \right) +\nu -\theta \|^{2}\right)  \\
&=&E\left(\| X-\theta \| ^{2}\right) +c^{2}\frac{\sigma^{4}}{\tau ^{2}+\sigma ^{2}}E\left( \frac{S^{2}}{\sigma ^{2}}\right)
^{2}E\left( \frac{1}{\frac{\| X-\nu \| ^{2}}{\tau^{2}+\sigma ^{2}}}\right)  \\
&&-2c\sigma ^{2}E\left( \frac{S^{2}}{\sigma ^{2}}\right) E\left(\left\langle \frac{X-\theta }{\sigma },\frac{1}{\frac{\| X-\nu
\|^{2}}{\sigma ^{2}}}\left( \frac{X-\nu }{\sigma }\right)\right\rangle \right) .
\end{eqnarray*}
Using the same technicals of Theorem \ref{t 3.3}, we obtain
\begin{equation*}
\left( \delta _{EB}^{\ast };\nu ,\tau ^{2},\sigma ^{2}\right)=p\sigma ^{2}\left\{ 1-\frac{2n\left( p-2\right) c-n\left( n+2\right) c^{2}
}{p-2}\frac{\sigma ^{2}}{\tau ^{2}+\sigma ^{2}}\right\} .
\end{equation*}
\end{proof}
\begin{theorem} \label{t 3.5}
Let the estimator $\delta _{EB}^{\ast c}$ given in (\ref{eq3.10}), then\\
a) If $p\geq 3,$ a sufficient condition that the estimator
$\delta _{EB}^{\ast c}$ is minimax, is
\begin{equation*}
0\leq c\leq \frac{2\left( p-2\right) }{n+2},
\end{equation*}\\
b) the optimal value of $c$ so that the risk of the estimator $\delta _{EB}^{\ast c}$ is minimal is $\widehat{c}=\dfrac{\left( p-2\right) }{n+2}$, thus the estimator $\delta _{EB}^{\ast }$ is the best in the class of shrinkage estimators $\delta _{EB}^{\ast c}.$
\end{theorem}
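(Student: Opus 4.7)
The plan is to read Theorem 3.5 as a pure optimization exercise on the explicit risk formula established in Proposition 3.3, namely
\begin{equation*}
R(\delta_{EB}^{\ast c};\nu,\tau^{2},\sigma^{2})=p\sigma^{2}\Big\{1-\frac{2n(p-2)c-n(n+2)c^{2}}{p-2}\,\frac{\sigma^{2}}{\tau^{2}+\sigma^{2}}\Big\}.
\end{equation*}
Because $R(X)=p\sigma^{2}$, the minimaxity criterion $R(\delta_{EB}^{\ast c})\leq R(X)$ is equivalent to the bracketed correction being nonnegative, and the risk minimization in part (b) is equivalent to maximizing that same correction in $c$.

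For part (a), I would argue that since $\sigma^{2}/(\tau^{2}+\sigma^{2})>0$ and, thanks to $p\geq 3$, the denominator $p-2$ is positive, the inequality $R(\delta_{EB}^{\ast c};\nu,\tau^{2},\sigma^{2})\leq p\sigma^{2}$ reduces to
\begin{equation*}
nc\bigl[2(p-2)-(n+2)c\bigr]\geq 0.
\end{equation*}
The factor $n$ is positive, so this is a simple quadratic condition in $c$ whose nonnegativity set is precisely the interval $0\leq c\leq 2(p-2)/(n+2)$. Since the maximum likelihood estimator $X$ is minimax and $\delta_{EB}^{\ast c}$ dominates it on that interval, $\delta_{EB}^{\ast c}$ is itself minimax.

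For part (b), minimizing the risk in $c$ amounts to maximizing $g(c)=2n(p-2)c-n(n+2)c^{2}$. This is a downward-opening parabola ($g''(c)=-2n(n+2)<0$), so I would just set $g'(c)=2n(p-2)-2n(n+2)c=0$ to obtain the unique critical point $\widehat{c}=(p-2)/(n+2)$, which automatically lies inside the minimaxity interval of part (a). Plugging $c=\widehat{c}$ into (\ref{eq3.10}) recovers exactly the estimator $\delta_{EB}^{\ast}$ defined in (\ref{eq3.9}), so $\delta_{EB}^{\ast}$ is the best estimator in the class $\{\delta_{EB}^{\ast c}\}$ under the Bayesian quadratic risk.

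There is no real obstacle here: the heavy lifting (computation of the risk via Stein's identity and the marginal $\chi_{p}^{2}$ distribution) has already been absorbed into Proposition 3.3, so Theorem 3.5 is essentially a one-variable calculus exercise. The only subtlety worth flagging is the sign of $p-2$ and of $\sigma^{2}/(\tau^{2}+\sigma^{2})$, which is why the hypothesis $p\geq 3$ appears explicitly in part (a); without it the direction of the inequality would flip.
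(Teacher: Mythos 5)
Your proposal is correct and follows essentially the same route as the paper: both parts reduce to the quadratic $n(n+2)c^{2}-2n(p-2)c\leq 0$ from Proposition \ref{p 3.3} for minimaxity, and to minimizing that convex quadratic in $c$ to get $\widehat{c}=(p-2)/(n+2)$. Your version merely spells out the derivative computation where the paper invokes convexity, so there is nothing substantive to add.
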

\begin{proof}
a) Using the Proposition \ref{p 3.3}, a sufficient condition that
the estimator $\delta _{EB}^{\ast c}$ is minimax, is
\begin{equation*}
n\left( n+2\right) c^{2}-2n\left( p-2\right) c\leq 0,
\end{equation*}
hence
\begin{equation*}
0\leq c\leq \frac{2\left( p-2\right) }{n+2}.
\end{equation*}
b) From the convexity of the risk function $R\left( \delta_{EB}^{\ast c};\nu ,\tau ^{2},\sigma ^{2}\right) $ as a function of $c$, the
optimal value of $c$ so that the risk of the estimator $\delta _{EB}^{\ast c}$ is minimal is $\widehat{c}=\dfrac{\left( p-2\right)}{n+2}$. thus the
estimator $\delta _{EB}^{\ast }$ is the best in the class of estimators $\delta _{EB}^{\ast c}.$
\end{proof}
\subsection{\textit{Simulation}}
First, we illustrate graphically the risk ratios of the Modified Bayes estimator $\delta_{B}^{\ast}$ to the maximum likelihood estimator $X$, $\dfrac{R\left(\delta_{B}^{\ast }; \nu, \tau^{2}, \sigma^{2}\right)}{R\left(X\right)}$ as a function of $\lambda=\dfrac{\tau^{2}}{\sigma^{2}}$ for various values of $n$.
\begin{figure}[!htbp]
\centering
\includegraphics[width=9cm]{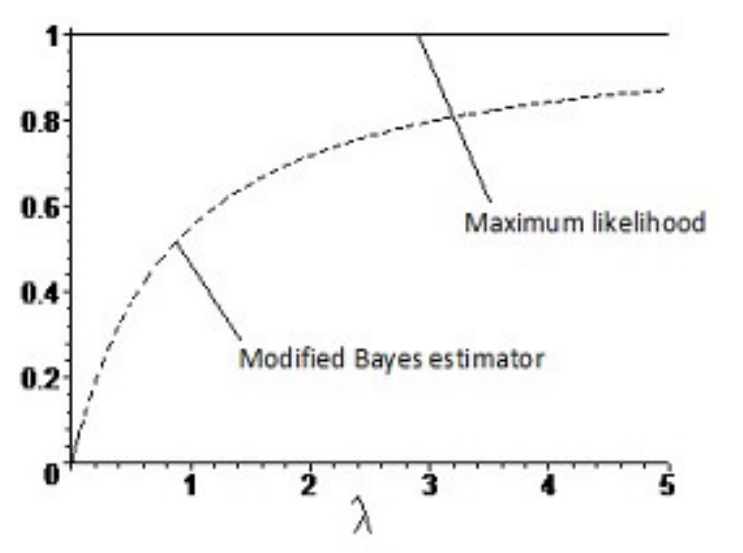}
\caption{$n=5$}
\label{ContinuationFig.1}
\end{figure}

\begin{figure}[!htbp]
\centering
\includegraphics[width=9cm]{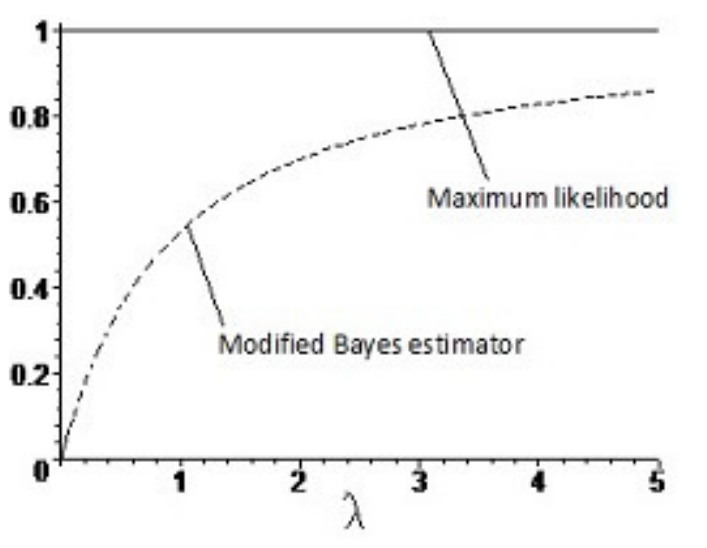}
\caption{$n=8$}
\label{ContinuationFig.2}
\end{figure}

In Fig.1 and Fig.2, we note that the risks ratio of the Modified Bayes estimator $\delta_{B}^{\ast}$ to the maximum likelihood estimator $X$, is less than 1, thus the Modified Bayes estimator is minimax for $n=5$ and $n=8$.\\

Secondly, we illustrate graphically the risks difference $ \vartriangle_{R}=R\left(\delta_{B}^{\ast }; \nu, \tau^{2},\sigma^{2}\right)-R\left(X\right)$ of the Modified Bayes estimator $\delta_{B}^{\ast}$ and the maximum likelihood estimator $X$, as a function of $ x=\tau^{2}$ and $ y=\sigma^{2}$ for various values of $n$.
\begin{figure}[!htbp]
\centering
\includegraphics[width=9cm]{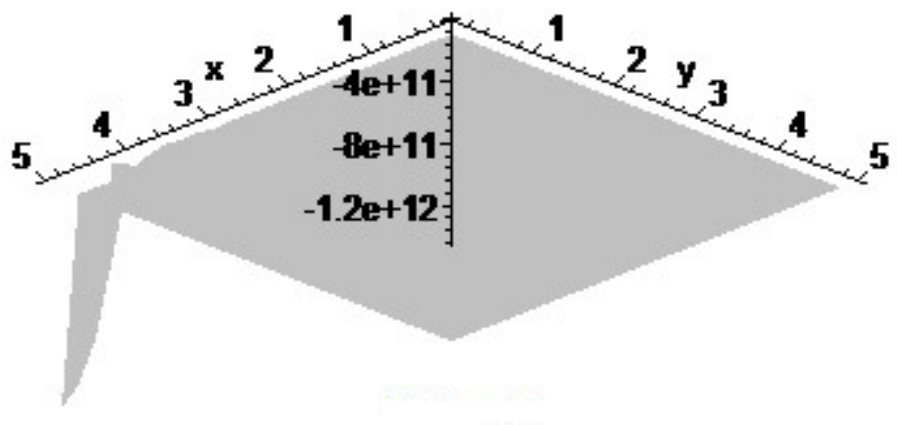}
\caption{$n=22$}
\label{ContinuationFig.3}
\end{figure}

\begin{figure}[!htbp]
\centering
\includegraphics[width=9cm]{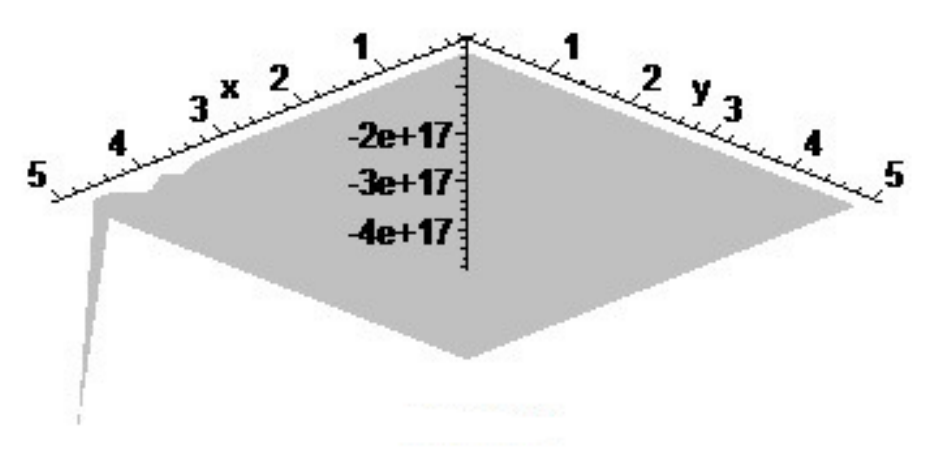}
\caption{$n=28$}
\label{ContinuationFig.4}
\end{figure}
In Fig.3 and Fig.4, we note that the risks difference between the Modified Bayes estimator $\delta_{B}^{\ast}$ and the Maximum likelihood estimator $X$, is negative, thus the Modified Bayes estimator is minimax for $n=22$ and $n=28$.\\

Finally, we illustrate the graphs of the upper bound given by the formula (3.8) for the risks difference  $ \vartriangle_{R}=R\left(\delta_{B}^{\ast }; \nu, \tau^{2},\sigma^{2}\right)-R\left(X\right)$ divised by the risk of the maximum likelihood estimator $R\left(X\right)$, as a function of $y=\dfrac{\tau^{2}}{\sigma^{2}}$ for various values of $n$.
\begin{figure}[H]
\centering
\includegraphics[width=7cm]{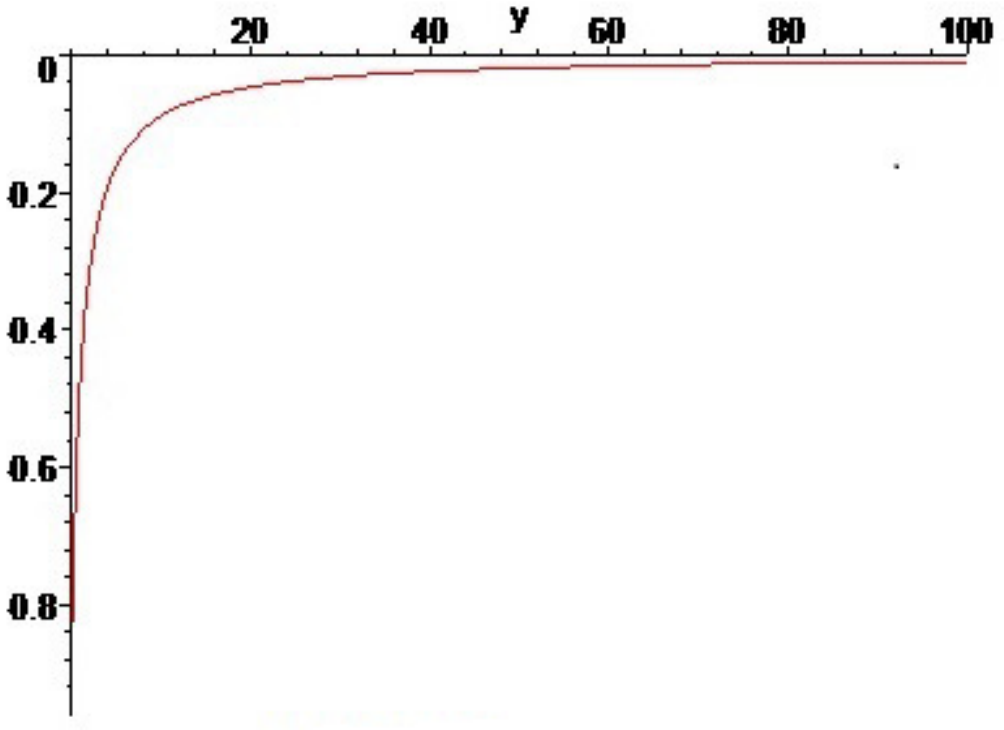}
\caption{$n=100$}
\label{ContinuationFig.5}
\end{figure}
 
\begin{figure}[H]
\centering
\includegraphics[width=7cm]{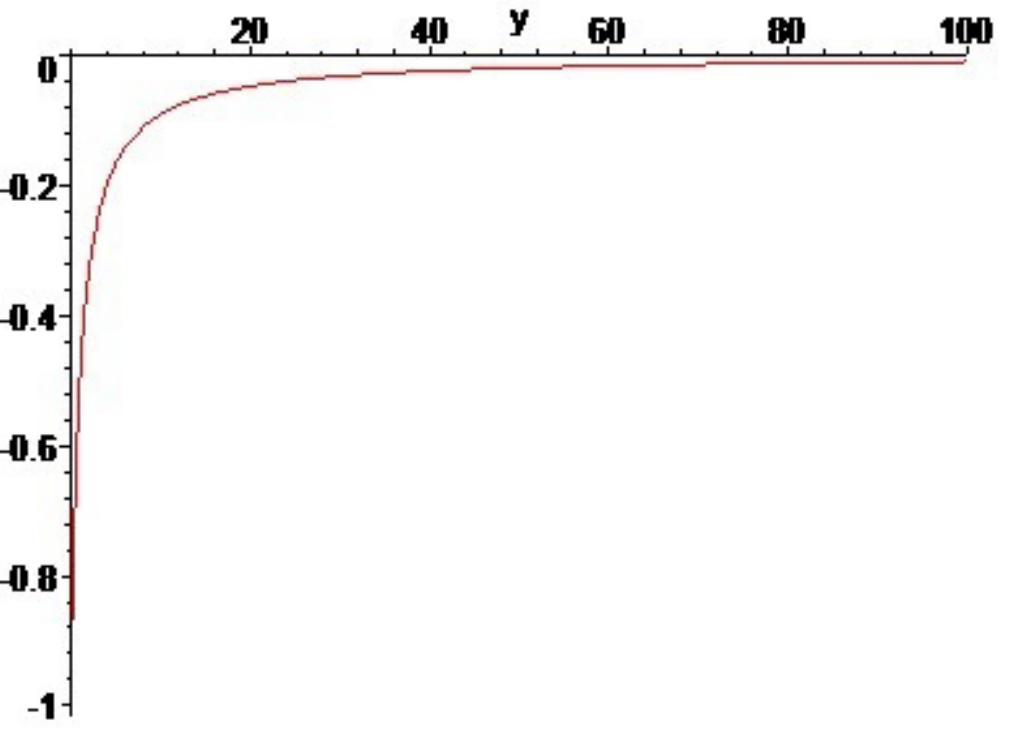}
\caption{$n=1000$}
\label{ContinuationFig.6}
\end{figure}
In Fig.5 and Fig.6, we note that the risks difference between the Modified Bayes estimator $\delta_{B}^{\ast}$ and the Maximum likelihood estimator $X$, is negative, thus the Modified Bayes estimator is minimax for the large values of $n$, for example $n=100$ and $n=1000$.

\section{\textbf{Conclusion}}
In this work, we studied the shrinkage estimators of a multivariate normal mean distributon in the Bayesian case. We considered the model $X\sim N_{p}\left(\theta, \sigma^{2}I_{p}\right) $ with $\sigma^{2}$ is unknown and we take the prior distribution $\theta \sim N_{p}\left(\upsilon, \tau^{2}I_{p}\right)$ where the hyperparameter $\upsilon$ is known and the hyperparameter $\tau^{2}$ is known or unknown. We constructed a Modified Bayes estimator $\delta_{B}^{\ast}$ when the hyperparameter $\tau^{2}$ is known and an Empirical Modified Bayes estimator $\delta_{EB}^{\ast}$ when the hyperparameter $\tau^{2}$ is unknown. We showed that the estimators $\delta_{B}^{\ast}$ and $\delta_{EB}^{\ast}$ are minimax when $n$ and $p$ are finite. When $n$ and $p$ tend simultaneously to infinity, the results agree with the one obtained in our previous published papers. An extension of this work is to study the minimaxity and the limits of risks ratios, when the model and the prior law have both a symmetrical spherical distribution.

 \section{Appendix}
In this section we give the following lemmas cited in \cite{s81} and \cite{ch82} respectively.
\begin{lemma} \label{l 5.1}
Let $Y$ be a $N(0, 1)$ real random variable and let $g: \R \longrightarrow \R $ be an indefinite integral of the Lebesgue measurable function $g^{\prime}$, essentially the derivative of $g.$ Suppose also that $E|g^{\prime}(Y)| <+\infty$. Then
\begin{equation*}
E[Yg(Y)]=E(g^{\prime}(Y)).
\end{equation*}
\end{lemma}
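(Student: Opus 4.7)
The plan is to prove this classical Stein identity by integration by parts against the standard normal density, handled carefully via Fubini's theorem since $g$ is only assumed to be absolutely continuous (an indefinite integral of $g'$), not differentiable in the classical sense everywhere.

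Denote by $\varphi(y) = (2\pi)^{-1/2} e^{-y^2/2}$ the standard normal density, and note the key identity $y\varphi(y) = -\varphi'(y)$, equivalently $\int_t^{\infty} y\,\varphi(y)\,dy = \varphi(t)$ and $\int_{-\infty}^{t} (-y)\,\varphi(y)\,dy = \varphi(t)$. First I would observe that since $E[Y]=0$ we may replace $g$ by $g-g(0)$ without changing either side, so by absolute continuity
\[
g(y) - g(0) \;=\; \int_0^{y} g'(t)\,dt \qquad (y\in \R),
\]
with the convention that for $y<0$ the integral equals $-\int_y^0 g'(t)\,dt$.

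Next I would split $E[Yg(Y)] = E[Y(g(Y)-g(0))]$ into the contributions from $\{Y>0\}$ and $\{Y<0\}$ and apply Fubini's theorem on each piece. On $\{Y>0\}$,
\[
\int_0^{\infty} y\,\varphi(y) \int_0^{y} g'(t)\,dt\,dy \;=\; \int_0^{\infty} g'(t) \int_t^{\infty} y\,\varphi(y)\,dy\,dt \;=\; \int_0^{\infty} g'(t)\,\varphi(t)\,dt .
\]
On $\{Y<0\}$, the analogous manipulation gives $\int_{-\infty}^0 g'(t)\,\varphi(t)\,dt$. Summing the two yields $E[Yg(Y)] = \int_{\R} g'(t)\,\varphi(t)\,dt = E[g'(Y)]$.

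The main obstacle is the justification of Fubini: I need the double integral $\int\!\!\int |g'(t)|\,y\,\varphi(y)\,\mathbf{1}_{0<t<y}\,dy\,dt$ to be finite (and its negative analogue). Using $\int_t^{\infty} y\,\varphi(y)\,dy = \varphi(t)$ for $t\ge 0$, this inner integral equals $\int_0^{\infty} |g'(t)|\,\varphi(t)\,dt$, which is dominated by $E|g'(Y)|<\infty$ by hypothesis; the same bound handles the negative half. This is where the assumption $E|g'(Y)|<\infty$ is essential, and once it is in place the swap of integration order is legitimate and the computation above goes through verbatim, completing the proof.
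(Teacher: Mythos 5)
Your proof is correct. Note that the paper itself gives no proof of this lemma: it is stated in the Appendix as a known result quoted from Stein (1981), so there is nothing to compare against except that reference, and your argument is precisely the classical one given there --- reduce to $g-g(0)$, write $g(y)-g(0)=\int_0^y g'(t)\,dt$ by absolute continuity, split at the origin, and exchange the order of integration using $\int_t^{\infty}y\varphi(y)\,dy=\varphi(t)$. Your Fubini justification is the right one, and it has the additional (worth stating explicitly) consequence that $Y\bigl(g(Y)-g(0)\bigr)$ is integrable, so that the left-hand side $E[Yg(Y)]$ is well defined under the sole hypothesis $E|g'(Y)|<+\infty$; with that remark the proof is complete as written.
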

\begin{lemma} \label{l 5.2}
For any real function $h$ such that $E(h(\chi_{q}^{2}(\lambda )) \chi_{q}^{2}(\lambda)) $ exists, we have
\begin{equation*}
E{h(\chi_{q}^{2}(\lambda)) \chi_{q}^{2}(\lambda)}=qE{h(\chi_{q+2}^{2}(\lambda))}+2\lambda E{h(\chi_{q+4}^{2}(\lambda))} .
\end{equation*}
\end{lemma}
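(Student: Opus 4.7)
The plan is to derive the identity from the Poisson-mixture representation of the non-central chi-squared (the representation recorded in Definition \ref{de 2.1}) combined with the elementary pointwise identity $u f_m(u) = m f_{m+2}(u)$ relating the densities of consecutive central chi-squared laws. This is the standard route for Stein-type weighting identities on $\chi^2_q(\lambda)$, and it is essentially the argument of Casella--Hwang \cite{ch82}, to whom the lemma is attributed.

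First, I would dispatch the central case. If $V\sim\chi_m^2$ with density $f_m(u)=u^{m/2-1}e^{-u/2}/[2^{m/2}\Gamma(m/2)]$, then the recursion $\Gamma(m/2+1)=(m/2)\Gamma(m/2)$ gives
\[
u\,f_m(u) = m\,f_{m+2}(u),
\]
so $E[h(V)V]=m\,E[h(V')]$ with $V'\sim\chi_{m+2}^2$. This is the $\lambda=0$ case. Next, I would use Definition \ref{de 2.1} to condition on the Poisson mixing variable: writing $w_k=e^{-\lambda}\lambda^k/k!$ for the Poisson weights (so that conditionally $\chi_q^2(\lambda)\mid K=k\sim\chi_{q+2k}^2$), I expand
\[
E[h(U)U]\;=\;\sum_{k\ge 0} w_k\int_0^{\infty}h(u)\,u\,f_{q+2k}(u)\,du\;=\;\sum_{k\ge 0} w_k\,(q+2k)\int_0^{\infty}h(u)\,f_{q+2k+2}(u)\,du,
\]
applying the central identity inside each fiber.

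The final step is to split $q+2k=q+2k$ and reindex. The $q$-piece is by inspection the Poisson mixture of $\chi_{q+2+2k}^2$ densities with weights $w_k$, which is precisely $q\,E[h(\chi_{q+2}^2(\lambda))]$. For the $2k$-piece, I use the shift identity $k w_k = \lambda w_{k-1}$ for $k\ge 1$ (the $k=0$ term vanishes) and the substitution $j=k-1$ to get
\[
2\sum_{k\ge 1} k\,w_k\int_0^{\infty}h(u)\,f_{q+2k+2}(u)\,du \;=\; 2\lambda\sum_{j\ge 0} w_j\int_0^{\infty}h(u)\,f_{q+4+2j}(u)\,du \;=\; 2\lambda\,E[h(\chi_{q+4}^2(\lambda))],
\]
recognizing the tail sum as the Poisson mixture defining $\chi_{q+4}^2(\lambda)$.

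I do not anticipate a real obstacle: the heart of the argument is the bookkeeping identity $kw_k=\lambda w_{k-1}$, which is exactly what produces the factor $2\lambda$ and shifts the degrees of freedom from $q+2$ to $q+4$ in the second term. The only mild technical point is justifying the interchange of sum and integral in the expansion above; this is immediate under the assumed integrability of $h(\chi_q^2(\lambda))\,\chi_q^2(\lambda)$, since all integrands are nonnegative after replacing $h$ by $|h|$, so Tonelli applies to the $|h|$ version and then dominated convergence transfers to $h$ itself.
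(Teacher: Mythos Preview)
Your argument is correct and is the standard derivation of this identity. Note, however, that the paper does not actually supply its own proof of Lemma~\ref{l 5.2}: the Appendix merely quotes the result from Casella and Hwang~\cite{ch82}, so there is no in-paper argument to compare against. Your route---the central density recursion $u f_m(u)=m f_{m+2}(u)$ applied fiberwise in the Poisson mixture, followed by the reindexing $k w_k=\lambda w_{k-1}$---is exactly the argument one finds in that reference.

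One minor remark on conventions: you take the Poisson mixing weights to be $w_k=e^{-\lambda}\lambda^k/k!$, i.e.\ mixing parameter $\lambda$, whereas Definition~\ref{de 2.1} as written uses parameter $\lambda/2$. The coefficient $2\lambda$ in the stated lemma is consistent with \emph{your} convention (with parameter $\lambda/2$ the second coefficient would be $\lambda$, not $2\lambda$), so this is a harmless notational inconsistency internal to the paper rather than a flaw in your proof; in any case the paper only invokes the lemma with $\lambda=0$.
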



\begin{thebibliography}{10}
\bibitem{b64} A.J. Baranchik,
\newblock \textit{\it Multiple regression and estimation of the mean of a multivariate normal distribution,}
\newblock Stanford Univ. Technical Report. {\bf 51} (1964).

\bibitem{bh11} D. Benmansour and A. Hamdaoui,
\newblock \textit{\it Limit of the Ratio of Risks of James-Stein Estimators with Unknown Variance,}
\newblock Far East Journal of Theorical Statistics. {\bf 36} (2011) 1, 31-53.

\bibitem{bm07} D. Benmansour and T. Mourid,
\newblock \textit{\it Etude d'une classe d'estimateurs avec r\'{e}tr\'{e}cisseur de la moyenne d'une loi gaussienne,}
\newblock Annales de l'ISUP. {\bf 51} (2007), Fascicule 1-2.

\bibitem{bs12} A.C. Brandwein and W.E. Strawderman,
\newblock \textit{\it Stein Estimation for Spherically Symmetric Distributions : Recent Developments,}
\newblock Statistical Science. {\bf 27} (2012) 1, 11-23.

\bibitem{ch82} G. Casella and J.T. Hwang,
\newblock \textit{\it Limit expressions for the risk of James-Stein estimators,}
\newblock The canadian Journal of Statistics. {\bf 10} (1982) 4, 305-309.

\bibitem{fos08} D. Fourdrinier, I. Ouassou and W.E. Strawderman,
\newblock \textit{\it Estimation of a mean vector under quartic loss,}
\newblock Journal of Statistical Planning and Inference. {\bf 138} (2008), 3841-3857.

\bibitem{hb15} A. Hamdaoui and D. Benmansour,
\newblock \textit{\it Asymptotic properties of risks ratios of shrinkage estimators,}
\newblock Hacettepe Journal of Mathematics and Statistics. {\bf 44} (2015) 5, 1181-1195.

\bibitem{hm17} A. Hamdaoui and N. Mezouar,
\newblock \textit{\it Risks Ratios of shrinkage estimators for the multivariate normal mean,}
\newblock Journal of Mathematics and Statistics, Science Publication. {\bf 13} (2017) 2, 77-87.

\bibitem{ham16} A. Hamdaoui, N. Mezouar, D. Benmansour and D. Bouguenna
\newblock \textit{\it Examples of Shrinkage Estimators of the Mean, Dominating the Maximum Likelihood Estimator in Larges Dimension,}
\newblock IOSR Journal of Mathematics. {\bf 12} (2016) 3, 14-28.

\bibitem{js61} W. James and C. Stein,
\newblock \textit{\it Estimation of quadratique loss,}
\newblock Proc 4th Berkeley Symp, Math. Statist.Prob, Univ of California Press, Berkeley. {\bf 1} (1961), 361-379.

\bibitem{tzi82} T. F. Li and W. H. Kio
\newblock \textit{\it Generalized James-Stein Estimators,}
\newblock Commun Statist.Thor.Math. {\bf 11} (1982) 20, 11(20), 2249-2257.

\bibitem{ls72} D.V Lindley and A.F.M. Smith,
\newblock \textit{\it Bayes estimates for the linear model (with discussion),}
\newblock J. Roy. Statist. Soc. {\bf 34} (1972), 1-41.

\bibitem{m14} Y. Maruyama,
\newblock \textit{\it $l_{p}$-norm based James-Stein estimation with minimaxity and sparsity,}
\newblock Statistics Theory (math.ST) arXiv: 1402.0302, (2014).

\bibitem{s56} C. Stein,
\newblock \textit{\it Inadmissibilty of the usual estimator for the mean of a multivariate normal distribution,}
\newblock Proc 3th Berkeley Symp, Math. Statist. Prob. Univ. of California Press, Berkeley. {\bf 1} (1956), 197-206.

\bibitem{s81} C. Stein,
\newblock \textit{\it Estimation of the mean of a multivariate normal distribution,}
\newblock Ann. Statis. {\bf 9} (1981) 6, 1135-1151.

\bibitem{s16} S. Zinodiny, S. Rezaei and S. Nadarajah
\newblock \textit{\it Minimax estimation of the mean of the multivariate normal distribution,}
\newblock Communication in Statistics- Theory and Methods. DOI: 10.1080/03610926.2015.1019146 (2016).
\end{thebibliography}
\end{document}